\newtheorem{theorem}{Theorem}
\newtheorem{proposition}{Proposition}
\newtheorem{lemma}{Lemma}
\begin{document}

\begin{abstract}
The Angular Constrained Minimum Spanning Tree Problem ($\alpha$-MSTP) is a combinatorial optimization problem with a strong computational geometry flavor. It is defined in terms of a complete undirected graph $G=(V,E)$ and an angle $\alpha \in (0,2\pi]$. Vertices of $G$ define points in the Euclidean plane while edges, the line segments connecting them, are weighted by the Euclidean distance between their endpoints. A spanning tree is an $\alpha$-spanning tree ($\alpha$-ST) of $G$ if, for any $i \in V$, the smallest angle that encloses all line segments corresponding to its $i$-incident edges does not exceed $\alpha$. $\alpha$-MSTP consists in finding an $\alpha$-ST with the least weight. We introduce two $\alpha-$MSTP integer programming formulations, ${\mathcal F}_{xy}^*$ and $\mathcal{F}_x^{++}$ and their accompanying Branch-and-cut (BC) algorithms, {\tt BCFXY}$^*$ and {\tt BCFX}$^{++}$. Both formulations can be seen as improvements over formulations coming from the literature. The strongest of them, $\mathcal{F}_x^{++}$, was obtained by: (i) lifting an existing set of inequalities in charge of enforcing $\alpha$ angular constraints and (ii) characterizing $\alpha$-MSTP valid inequalities from the Stable Set polytope, a structure behind $\alpha-$STs, that we disclosed here. These formulations and their predecessors in the literature were compared from a polyhedral perspective. From a numerical standpoint, we observed that {\tt BCFXY}$^*$ and {\tt BCFX}$^{++}$ compare favorably to their competitors in the literature. In fact, thanks to the quality of the bounds provided by $\mathcal{F}_x^{++}$, {\tt BCFX}$^{++}$ seems to outperform the other existing $\alpha-$MSTP algorithms. It is able to solve more instances to proven optimality and to provide sharper lower bounds, when optimality is not attested within an imposed time limit. As a by-product, {\tt BCFX}$^{++}$ provided 8 new optimality certificates for instances coming from the literature.

\end{abstract} 
\begin{keyword}
Combinatorial Optimization \sep Angular constrained spanning trees \sep Stable Set Polytope \sep Branch-and-cut algorithms  \sep Polyhedral combinatorics
\end{keyword}

\begin{frontmatter}

\title{Improved Formulations and Branch-and-cut Algorithms for the Angular Constrained Minimum Spanning Tree Problem}

\author{Alexandre Salles da Cunha \fnref{fn1}\corref{corr1}}
\cortext[corr1]{Corresponding author}
\fntext[fn1]{Alexandre Salles da Cunha was partially funded by CNPq grant 303928/2018-2 and FAPEMIG grant CEX - PPM-00164/17.  This manuscript was submitted to Computers \& Operations Research on April 20th, 2020.}
\address{Universidade Federal de Minas Gerais, Departamento de Ci\^encia da Computa\c c\~ao, Belo Horizonte, Brazil}
\ead{acunha@dcc.ufmg.br}

\end{frontmatter}

\section{Introduction}
\label{sec:introduction}

The Angular Constrained Minimum Spanning Tree Problem ($\alpha$-MSTP) is a combinatorial optimization problem with a strong computational geometry flavor. It is defined in terms of an angle $\alpha \in (0,2\pi]$ and a complete undirected graph $G = (V,E)$, with $n = |V|$ vertices and $m = |E|$ edges. Every vertex of $V$ corresponds to a point in the Euclidean plane. An edge $e=\{i,j\} \in E$ represents the line segment connecting $i$ and $j$. A weight $w_{e}$, corresponding to the Euclidean distance between the endpoints $i$ and $j$, is assigned to each edge $e = \{i,j\}$ of $E$. The weight of a spanning tree $(V,E_T)$ is the sum of the weights of its edges, $\sum_{e \in E_T}w_e$. A spanning tree of $G$ is an $\alpha$-spanning tree ($\alpha$-ST) if, for every vertex $i \in V$, the smallest angle enclosing all line segments corresponding to its $i$-incident edges does not exceed $\alpha$.  $\alpha-$MSTP looks for an $\alpha-$ST with the minimum possible weight. 

In order to illustrate the geometry of the $\alpha$-Angular Constraints ($\alpha-$ACs), consider the points indicated in Figure \ref{fig:fig1}. Horizontal and vertical coordinates for each vertex (or, equivalently, point in the plane) are given in Table \ref{tab:datafigura1}. Take $\alpha = \frac{\pi}{3}$, for instance. Edges $\{i,u\}$, $\{i,t\}$ and $\{i,l\}$ cannot simultaneously belong to a $\frac{\pi}{3}$-ST of $G$ since the smallest angle enclosing all these edges has $\pi$ radians.   To validate such an observation, assume that to each edge incident to $i$ corresponds an unitary vector (under the Euclidean norm) directed from $i$ towards the other endpoint of the edge.  All unitary vectors associated to the edges depicted in Figure \ref{fig:fig1} are plotted in Figure \ref{fig:fig2}. Thus, $\vec{iu}$, $\vec{it}$ and $\vec{il}$ are the unitary vectors, directed from $i$ towards the other endpoints of edges $\{i,u\}$, $\{i,t\}$ and $\{i,l\}$, respectively.  
 Note that the circular sector obtained by rotating vector $\vec{iu}$ anti-clockwise round $i$, passing by $\vec{it}$, until it reaches $\vec{il}$, involves $\pi$ radians and that exceeds the maximum value of $\alpha = \frac{\pi}{3}$. As another example, consider the edges $\{i,z\}$, $\{i,q\}$. Note that there is a circular sector of at most $\frac{\pi}{3}$ radians that covers both unitary vectors $\vec{iz}$ and $\vec{iq}$. For instance, consider the sector that starts precisely at $\vec{iz}$ and spans  $\alpha = \frac{\pi}{3}$ radians anti-clockwise round $i$. That sector ends $\frac{\pi}{6}$ radians past $\vec{iq}$ and thus contains both unitary vectors. Thus, these two edges could be included in a $\frac{\pi}{3}-$ST of $G$.
 
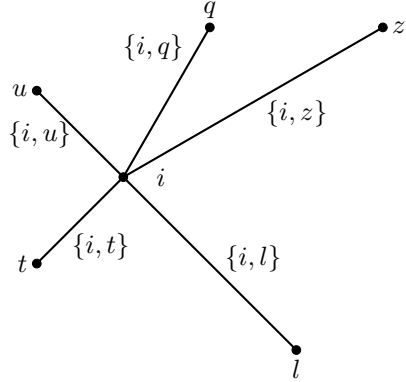
\begin{figure}[!h]
\begin{center}
\begin{tikzpicture}[xscale=1.15,yscale=1.15] 
\coordinate (vi) at (0,0);
\coordinate (vz) at (3,1.73205);
\coordinate (vq) at (1,1.73205);
\coordinate (vu) at (-1,1);
\coordinate (vt) at (-1,-1);
\coordinate (vl) at (2,-2);
\draw[fill] (vi) circle [radius=0.05];
\draw[fill] (vz) circle [radius=0.05];
\draw[fill] (vq) circle [radius=0.05];
\draw[fill] (vu) circle [radius=0.05];
\draw[fill] (vt) circle [radius=0.05];
\draw[fill] (vl) circle [radius=0.05];
\node [right] at (vi) {\hspace{2mm} $i$}; 
\node [right] at (vz) {$z$};
\node [above] at (vq) {$q$}; 
\node [left] at (vu) {$u$}; 
\node [left] at (vt) {$t$};
\node [below] at (vl) {$l$};  
\node [left] at (0.8,1.5) {$\{i,q\}$}; 
\node [below] at (2.0,1.0) {$\{i,z\}$};
\node [below,left] at (-.5,.5) {$\{i,u\}$};
\node [right] at (-0.7,-.8) {$\{i,t\}$};
\node [above] at (1.5,-1.2) {$\{i,l\}$};
\draw [-, thick] (vi) -- (vz);
\draw [-, thick] (vi) -- (vq);
\draw [-, thick] (vi) -- (vu);
\draw [-, thick] (vi) -- (vt);
\draw [-, thick] (vi) -- (vl);
\end{tikzpicture} 
\end{center}
\caption{Euclidean plane points $i, l, q, t, u$ and $z$ (corresponding coordinates shown in Table \ref{tab:datafigura1}) and line segments connecting $i$ to every other point. Figure extracted from \cite{cunha:2019}.}
\label{fig:fig1}
\end{figure}

\begin{table}[!h]
\caption{Input data for points (i.e., vertices) in Figures \ref{fig:fig1}-\ref{fig:fig3}. Table extracted from \cite{cunha:2019}.}
\label{tab:datafigura1}       
\renewcommand{\arraystretch}{1.35}
\begin{center}
\begin{tabular}{lcccc}
\hline
 & & \multicolumn{2}{c}{Coordinates for the Euclidean plane points} & \\ \cline{3-4}
vertex of $V$ & & horizontal & vertical & $\measuredangle_{0ip}$  \\
\hline
$i$ & & 0   & 0    & $-$ \\
$z$ & & 3    & $\sqrt{3}$ & $\frac{\pi}{6}$ \\
$q$ & & 1    & $\sqrt{3}$ & $\frac{\pi}{3}$ \\
$u$ & & -1    & 1 & $\frac{3\pi}{4}$ \\
$t$ &  & -1   & -1 & $\frac{5\pi}{4}$ \\
$l$ &  & 2   & -2 & $\frac{7\pi}{4}$ \\
\hline
\end{tabular}
\end{center}
\renewcommand{\arraystretch}{1}
\end{table}

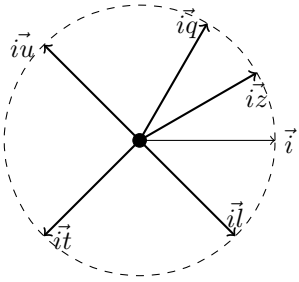
\begin{figure}[!t]
\begin{center}
\begin{tikzpicture}[xscale=1.8,yscale=1.8] 
\coordinate (vi) at (0,0);
\coordinate (vvi) at (1,0);
\coordinate (vz) at (3,1.73205);
\coordinate (vq) at (1,1.73205);
\coordinate (vu) at (-1,1);
\coordinate (vt) at (-1,-1);
\coordinate (vl) at (2,-2);
\coordinate (vzu) at (0.8660,.5);
\coordinate (vqu) at (.5,0.8660);
\coordinate (vuu) at (-0.70711,0.70711);
\coordinate (vtu) at (-0.70711,-0.70711);
\coordinate (vlu) at (0.70711,-0.70711);
\draw[fill] (vi) circle [radius=0.05];
\node [left] at (vqu) {$\vec{iq}$}; 
\node [below] at (vzu) {$\vec{iz}$};
\node [below,left] at (vuu) {$\vec{iu}$};
\node [right] at (vtu) {$\vec{it}$};
\node [above] at (vlu) {$\vec{il}$};
\node [right] at (vvi) {$\vec{i}$};
\draw [->] (vi) -- (vvi);
\draw [->, thick] (vi) -- (vzu);
\draw [->, thick] (vi) -- (vqu);
\draw [->, thick] (vi) -- (vuu);
\draw [->, thick] (vi) -- (vtu);
\draw [->, thick] (vi) -- (vlu);
\draw [dashed] (0,0) circle (1);
\end{tikzpicture} 
\end{center}
\caption{Unit vectors corresponding to the line segments (i.e., edges) in Figure \ref{fig:fig1}. Figure extracted from \cite{cunha:2019}. }
\label{fig:fig2}
\end{figure}

$\alpha-$MSTP suits well as a model for the design of wireless networks that rely on directional antennas. Such antennas concentrate power in directions that span restricted angles and, because of that, have some advantages over omni-directional antennas, that irradiate power in all directions \cite{carmi:2011}. They are able to reduce energy consumption, network congestion and signal interference \cite{carmi:2011,ackerman:2013,yu:2014}. 

To illustrate how the $\alpha-$ACs affect communication in these applications, consider the points indicated in Figure \ref{fig:fig1} and and assume that one directional antenna is placed at each point indicated there. Consider as well the unitary vectors associated to the edges incident to $i$, indicated in Figure \ref{fig:fig2}. Direct communication between $i$ and any of its neighbors in $G$, say $q$, can only take place if the orientations of the directed antennas placed at $i$ and $q$ allow the signal sent by $i$ to be captured by $q$ and vice-versa. Suppose all antennas concentrate power in directions spanning angles of $\alpha = \frac{\pi}{3}$. Associated to each antenna there is a cone (of signal) that defines the angular sector where power is concentrated. The cone is  defined by the point where it is placed and by its two unitary extreme rays. If one of the rays of the cone  placed at $i$ is the vector $\vec{iz}$ and the other ray ends $\frac{\pi}{3}$ radians anti-clockwise round $i$ from $\vec{iz}$, the signal sent by $i$ can reach points $z$ and $q$ of Figure \ref{fig:fig1}. In such a simplified model, these two points capture the signal sent by $i$ no matter how far they are from the source $i$. The remaining vertices do not capture the signal sent by $i$ because they lie outside the cone. We say that points $z$ and $q$ are {\it seen} by the antenna placed at $i$. If, in addition to that, the orientation of the antenna placed at $q$ allows $i$ to capture the signal sent by $q$, $i$ and $q$ can communicate directly. Accordingly, edge $\{i,q\}$ can belong to a $\frac{\pi}{3}-$ST of the associated undirected graph.

To the best of our knowledge, Aschner and Katz \cite{aschner:2017} and Cunha and Lucena 
\cite{cunha:2019} are the only two references dedicated to the problem. Aschner and Katz \cite{aschner:2017} introduced $\alpha-$MSTP and demonstrated the NP-Completeness of its decision version for $\alpha \in \{ \frac{2\pi}{3},\pi\}$. From an algorithmic perspective, they introduced approximation methods for different values of $\alpha$ and efficient algorithms for some polynomial time solvable cases. 

Following another line of research, Cunha and Lucena \cite{cunha:2019} introduced two different integer programming (IP) formulations for $\alpha-$MSTP: $\mathcal{F}_{x}$, a formulation defined on the {\it natural} space of edge variables $\mathbf{x} \in \{0,1\}^m$ and $\mathcal{F}_{xy}$, an {\it extended} formulation that also uses a second set of variables, $\mathbf{y}\in \{0,1\}^{2m}$. $\mathcal{F}_{xy}$ uses $\mathbf{x}$ for choosing the edges in the spanning tree and $\mathbf{y}$ to enforce the $\alpha-$ACs. Formulation $\mathcal{F}_{x}$, on the other hand, only needs the first set of variables since the $\alpha-$ACs are enforced by different modeling arguments that do not require the use of $\mathbf{y}$.  Both formulations were investigated and compared from a polyhedral point of view. The role of $\alpha$ on the relative strength of these formulations was investigated as well. A constructive heuristic and two Branch-and-cut algorithms (BC), {\tt BCFXY} and {\tt BCFX}, respectively based on formulations $\mathcal{F}_{xy}$ and $\mathcal{F}_{x}$, were also implemented and tested there. 

From now on, denote by $w({\cal F})$ the Linear Programming Relaxation (LPR) bounds provided by any $\alpha-$MSTP formulation $\mathcal{F}$. 

\subsection{Our contribution}

In this paper, we present improved formulations for $\alpha-$MSTP. The first, ${\cal F}_{xy}^*$, differs from $\mathcal{F}_{xy}$ in minor details. ${\cal F}_{xy}^*$ does not include one set of $2m$ (non-redundant) constraints that arise in the definition of  $\mathcal{F}_{xy}$. We show that $w(\mathcal{F}_{xy}) = w(\mathcal{F}_{xy}^*)$ always holds, despite the fact that $\mathcal{F}_{xy}$ may be strictly contained in $\mathcal{F}_{xy}^*$.   In addition, we show that the vector of variables $\mathbf{y}$ does not need to be integer constrained. Our second formulation, $\mathcal{F}_x^{++}$, builds on formulation $\mathcal{F}_x$ from \cite{cunha:2019} and includes some new valid inequalities, characterized here. One of these families of valid inequalities is actually a lifting of valid inequalities used in $\mathcal{F}_{x}$ to enforce the $\alpha-$ACs. Alone, the inclusion of these lifted inequalities to $\mathcal{F}_x$ lead to a much stronger formulation, $\mathcal{F}_{x}^{+}$. We showed that the projection of  $\mathcal{F}_{xy}^*$ (and $\mathcal{F}_{xy}$) onto the $\mathbf{x}$ space is contained in $\mathcal{F}_{x}^{+}$, so that $w(\mathcal{F}_{x}^{+}) \leq w(\mathcal{F}_{xy}^*)$ holds. The question of whether these LPR bounds do match is still open; in all our numerical testings, these two values were identical, though. We also characterized a Stable Set Structure in solutions for $\alpha-$MSTP, for $\alpha < \pi$. Thus, we investigated the use of valid inequalities for the Stable Set polytope \cite{padberg:1973}, to strengthen LPR bounds $w(\mathcal{F}_x^+)$ for  $\alpha-$MSTP. The resulting reinforced formulation, $\mathcal{F}_{x}^{++}$, is empirically shown to provide the strongest known $\alpha$-MSTP LPR bounds for the $\alpha$-MSTP instances tested here, the hardest in the literature.


On the algorithmic side, we introduced two Branch-and-cut (BC) algorithms, {\tt BCFXY}$^*$ and {\tt BCFX}$^{++}$, respectively based on $\mathcal{F}_{xy}^{*}$ and  $\mathcal{F}_{x}^{++}$. We extended the computational experiments conducted in \cite{cunha:2019}, considering now additional values of $\alpha$ in the range $\alpha < \pi$, to which correspond the hardest instances of the problem. Our computational results suggest that, thanks to the strength of LPR bounds $w(\mathcal{F}_x^{++})$, {\tt BCFX}$^{++}$ outperforms its competitors, being able to solve more instances to proven optimality within a two hour CPU time limit. As a by product, algorithm {\tt BCFX}$^{++}$ provides 8 additional optimality certificates for instances coming from the literature.

The remainder of the paper is organized as follows. In Section \ref{sec:notation}, we complement  the notation described so far. Formulations $\mathcal{F}_{xy}$ and $\mathcal{F}_x$ from \cite{cunha:2019} are reviewed in Section \ref{sec:literature} while our improved ones are presented in Section \ref{sec:improvedformulations}. The BC algorithms {\tt BCFXY}$^{*}$ and {\tt BCFX}$^{++}$ based on these enhanced models are discussed in Section \ref{sec:bcs}. In Section \ref{sec:results}, we numerically evaluate the quality of the LPR bounds introduced here and compare four $\alpha-$MSTP BC algorithms, two introduced here and two coming from \cite{cunha:2019}. We close the paper in Section \ref{sec:conclusions}, indicating directions for future research. The paper also includes an Appendix, where the projection of $\mathcal{F}_{xy}^*$ onto the $\mathbf{x}$ space is shown to be contained in $\mathcal{F}^+$. Aggregated computational results presented in Section \ref{sec:results} are complemented with an on-line supplementary document, where more detailed computational results are offered.
\section{Notation}
\label{sec:notation}

For every $S \subseteq V$, denote by $\delta(S)$ (resp. $E(S)$) the edges of $G$ with one end vertex (resp. the two end vertices) in $S$.
For simplicity, if $S$ contains a single vertex, say vertex $i$, we use $\delta(i)$ instead of $\delta(\{i\})$. Similarly, given a spanning tree $T = (V,E_T)$ of $G$, $\delta_T(i) := \delta(i) \cap E_T$ denotes its $i$-incident edges. The set of all spanning trees of $G$, satisfying the $\alpha-$ACs or not, is denoted by ${\cal T}$.

The remainder of this section is dedicated to presenting the notation used to mathematically formalize the $\alpha-$ACs. The notation used here is precisely that introduced in \cite{cunha:2019}; the figures and drawings we make use  to present the notation were extracted from \cite{cunha:2019}. The definitions that follow are illustrated for the edges and vertices depicted in Figure \ref{fig:fig1}, whose horizontal and vertical coordinates are given in Table \ref{tab:datafigura1}.

We define the set of unit vectors that are collinear and have the same directions as those vectors having $i \in V$ as their initial point and $j \in V \setminus \{i\}$ as their terminal ones as $R_i = \{\vec{ij} : j \in V \setminus \{i\} \}$. Note that vectors in set $R_i$ are directly associated to the edges of $\delta(i)$. All vectors associated to the edges in $\delta(i)$ plotted in Figure \ref{fig:fig1} are indicated in Figure \ref{fig:fig2} which also plots an additional vector, $\vec{i}$. For every $i \in V$, $\vec{i}$ is the vector equal to $(1\hspace{2mm} 0)^T$, i.e., a vector with the same magnitude and direction as $(1\hspace{2mm} 0)^T$ but whose initial point is placed at $i \in V$. We define by $d_i$ the closed unit disk centered at $i$. The boundary of $d_i$, where all terminal points of the vectors in $R_i$ are located, is also indicated in Figure \ref{fig:fig2}. 

We define $\measuredangle_{0ij}$ as the anti-clockwise angle that $\vec{i}$ forms with $\vec{ij}$. For every vector $\vec{ij} \in R_i$ in Figure \ref{fig:fig2}, corresponding angles $\measuredangle_{0ij}$ are shown in Figure \ref{fig:fig3}. The angle obtained by rotating, say $\vec{ij}$, anti-clockwise around $i$ until it becomes collinear with $\vec{ik}$ for $j \neq k$ is denoted by $\measuredangle_{jik}$. Similarly, $\measuredangle_{kij}$ is the angle obtained when $\vec{ik}$ moves anti-clockwise round $i$ until $\vec{ij}$ is met.

Angles $\measuredangle_{jik}$ and $\measuredangle_{kij}$ can be computed quite easily. To that aim, let $\measuredangle^{j}_{0ik} = \measuredangle_{0ik}$, if $\measuredangle_{0ik} \geq \measuredangle_{0ij}$, and $\measuredangle^{j}_{0ik} = 2\pi + \measuredangle_{0ik}$, if $\measuredangle_{0ik} < \measuredangle_{0ij}$ otherwise applies. It then follows that $\measuredangle_{jik} = \measuredangle^{j}_{0ik} - \measuredangle_{0ij}$.
Angle $\measuredangle_{kij}$ may be computed in a similar fashion. When $\vec{ij}$ and $\vec{ik}$ are collinear and point at the same direction, the two vectors are the same. In that case, $\measuredangle_{0ij} = \measuredangle_{0ik}$ applies. Thus, $\measuredangle_{jik} = \measuredangle_{kij} = 0$ results. Conversely, if $\vec{ij}$ and $\vec{ik}$ are different vectors, $\measuredangle_{jik} + \measuredangle_{kij} = 2\pi$ then holds. Table \ref{tab:angles1} indicates the corresponding angles $\measuredangle_{jik}$ for every ordered pair of distinct vectors $\vec{ij}$ and $\vec{ik}$ in $R_i$. In addition, the table also highlights the largest of these values, for every individual vector $\vec{ij}$ in that figure.

For a given subset $s(i) \subseteq \delta(i)$, define $Z_i \subseteq R_i$ as the subset of vectors of $R_i$ that correspond to the edges of $s(i)$. The edges in $s(i)$ satisfy the $\alpha-$AC if and only if one is able to find a circular sector for disk $d_i$, with at most $\alpha$ radians round $i \in V$, that encloses all vectors in $Z_i$. We now discuss how to compute appropriate angles and check whether or not a given set of edges $s(i)$ meet such requirements. By rotating any given $\vec{ij} \in Z_i$ anti-clockwise around $i$, the angles $\measuredangle_{jil}$ that $\vec{ij}$ forms with every $\vec{il} \in Z_i \setminus \{\vec{ij}\}$ can be easily computed. Out of these angles, assume that $\measuredangle_{jim}$ is the one with the largest value and consider the circular sector it implies in disk $d_i$. Apply the procedure to every individual $\vec{ij} \in Z_i$ and 
compute the largest of these values. After these $|Z_i|$ maximum angles were computed, one just need to keep the one with the smallest angle. Denote it by $\sphericalangle^{i}$ and its central angle by $\theta(\sphericalangle^{i})$ and note that this is the circular sector we seek. 
Computing $\sphericalangle^{i}$ thus requires the identification of one circular sector for every $\vec{ij} \in Z_i$, i.e., the one with the largest central angle $\measuredangle_{jim}$. Among these, the one with the smallest central angle then defines $\sphericalangle^{i}$. 
For the vertices indicated in Figure \ref{fig:fig1}, $s(i) = \delta(i)$ and thus $Z_i = R_i$, note that $\theta(\sphericalangle^i) = \frac{3\pi}{2}$ then results. Note that the last column in Table \ref{tab:angles1} gives the maximum angular sector for each $\vec{ij} \in R_i$.  The smallest of these values, corresponding to $\measuredangle_{tiu}$ and $\measuredangle_{lit}$, has $\frac{3\pi}{2}$ radians.

For every $i \in V$ and every $\vec{ij} \in R_i$, define
$$L_{ij} := \{j\} \cup \{k \in V \setminus \{i,j\}:  \measuredangle_{kij} \leq \alpha\}.$$
\noindent
Now note that $k \in L_{ij}$ if and only if one reaches or goes past $\vec{ij}$ by rotating $\vec{ik}$ anti-clockwise around $i$ by an angle of $\alpha$ radians.
Therefore, all vertices $k \in L_{ij}$ are such that $\measuredangle_{0ij}^k$, which is equal to either $\measuredangle_{0ij}$ or $\measuredangle_{0ij} +2\pi$, by definition, satisfies $\measuredangle_{0ij}^k \in [\measuredangle_{0ik}, \measuredangle_{0ik} + \alpha$].
For the unit vectors $R_i$ in Figure \ref{fig:fig2}, Table \ref{tab:L} shows the different sets $L_{ij}$ that apply to every different value $\alpha \in  \{ \pi, \frac{2\pi}{3}, \frac{3\pi}{2}\} $.

We can now address the central question of deciding whether a spanning tree $T = (V,E_T)$ is feasible or not, depending on the spanning tree edges incident to each of its vertices. The procedure in charge of that checks the satisfaction of $\alpha-$ACs for one vertex $i \in V$ at a time, as follows. Define $\sphericalangle^i_T$ as the smallest angled circular sector that simultaneously encloses all vectors in $Z_i = \{ \vec{ij} \in R_i: \{i,j\} \in \delta_T(i)\}$. If $i$ is a leaf of $T$, define $\theta(\sphericalangle^i_T) = 0$. Otherwise,  if $|\delta_T(i)| \ge 2$ applies, $\theta(\sphericalangle^{i}_T)$ is then given by
\begin{equation}
\label{defangle0}
\theta(\sphericalangle^{i}_T) = \min_{\{i,j\} \in \delta_T(i)} \left \{ \max_{\{i,k\} \in \delta_T(i) \setminus  \{i,j \}}\left \{ \measuredangle_{jik} \right \} \right \}.
\end{equation}
If $\theta(\sphericalangle^{i}_T) \leq \alpha$ holds for every $i \in V$, $T$ is an $\alpha$-ST and is therefore feasible for the problem. Computing $\sphericalangle^i_T$ could be made simpler, provided that the edges of $T$ incident to $i$, $\delta_T(i) = \{ \{i,v_1\},\{i,v_2\},\dots, \{i,v_p\} \}$ ($p = \lvert \delta_T(i)\rvert$), are conveniently sorted. Thus, suppose that $\measuredangle_{0iv_1} \leq \measuredangle_{0iv_2} \leq \cdots \leq \measuredangle_{0iv_p}$ holds. 
Given this sorting, note that $\vec{iv_1}$ forms a largest possible angle with $\vec{iv_p}$,  $\vec{iv_2}$ does that with $\vec{iv_1}$ and so on. Assuming that $v_0 \equiv v_p$, for simplicity, $\theta(\sphericalangle^{i}_T)$ may then be efficiently computed in $O(n)$ time complexity by direct comparison of just $p$ angles as

\begin{equation}
\label{defangle1}
\theta(\sphericalangle^i_T) = \min_{a = 1,\dots,p} \left \{ \measuredangle_{v_a i v_{a-1}} \right \}.
\end{equation}

\begin{figure}[!t]
\begin{center}
\begin{tikzpicture}[xscale=1.5,yscale=1.5] 
\coordinate (ivetor) at (1,0);
\coordinate (z)	at (0.8660254038, 0.5);
\coordinate (q)	at (0.5, 0.86603);
\coordinate (u)  at (-0.70711, 0.70711);
\coordinate (l)   at  (-0.70711, -0.70711);
\coordinate (t)  at (0.70711, -0.70711);
\coordinate (vi) at (0,0);
\node [below] at (vi) {\vspace{2mm} $i$}; 
\node [below] at (ivetor) {\vspace{2mm} $\vec{i}$};
\coordinate (vzu) at (0.8660,.5);
\coordinate (vqu) at (.5,0.8660);
\coordinate (vuu) at (-0.70711,0.70711);
\coordinate (vtu) at (-0.70711,-0.70711);
\coordinate (vlu) at (0.70711,-0.70711);
\node [right] at (vzu) {$\measuredangle_{0iz}$}; 
\node [left, above] at (vqu) {$\measuredangle_{0iq}$};
\node [left, above] at (-.8, .8) {$\measuredangle_{0iu}$};
\node [left, below] at (vtu) {$\measuredangle_{0it}$};
\node [below,right] at (vlu) {$\measuredangle_{0il}$};
\draw [->, thin] (vi)--(1,0); 
\draw [-, thin, dotted] (vi) -- (z);
\draw [-, thin, dotted] (vi) -- (q);
\draw [-, thin, dotted] (vi) -- (u);
\draw [-, thin, dotted] (vi) -- (t);
\draw [-, thin, dotted] (vi) -- (l);
\draw [->,dashed, thin] (1,0) arc (0:30:1);
\draw [->,dashed, thin] (1.0,0) arc (0:60:1.0);
\draw [->,dashed, thin] (1.0,0) arc (0:135:1.0);
\draw [->,dashed, thin] (1.0,0) arc (0:225:1.0);
\draw [->,dashed, thin] (1.0,0) arc (0:315:1.0);
\end{tikzpicture} 
\end{center}
\caption{Angles $\measuredangle_{0ij}$ defined by every different vector $\vec{ij}$ in Figure \ref{fig:fig2}, with horizontal axis support for $\vec{i}$ depicted for reference. Figure extracted from \cite{cunha:2019}. }
\label{fig:fig3}
\end{figure}
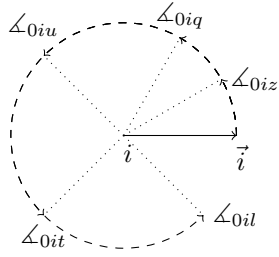

\begin{table}[!t]
\caption{Angles $\measuredangle_{jik}$ computed for every distinct pair of ordered vectors $\vec{ij}$ and $\vec{ik}$ in Figure \ref{fig:fig2}, with a highlight for the largest corresponding angle for every individual vector, $\vec{ij}$. Table extracted from \cite{cunha:2019}. }
\label{tab:angles1}
\renewcommand{\arraystretch}{1.35}
\begin{center}
\begin{tabular}{lcccccccc}
\hline
$j$   & & \multicolumn{5}{c}{$k$} & &  \\
 \cline{1-1}   \cline{3-7}
   & & $z$ & $q$ & $u$ & $t$ & $l$ & & Largest $\measuredangle_{jik}$ \\  \cline{9-9}
$z$  & & - & $\frac{\pi}{6}$ & $\frac{7\pi}{12}$ &  $\frac{13\pi}{12}$ & $\frac{19\pi}{12}$ & & $\frac{19\pi}{12}$\\
$q$  & & $\frac{11\pi}{6}$ & - & $\frac{5\pi}{12}$ &  $\frac{11\pi}{12}$ & $\frac{17\pi}{12}$ & & $\frac{11\pi}{6}$ \\
$u$  &  & $\frac{17\pi}{12}$ & $\frac{19\pi}{12}$ & - &  $\frac{\pi}{2}$  & $\pi$ & & $\frac{19\pi}{12}$ \\
$t$   &   & $\frac{11\pi}{12}$ & $\frac{13\pi}{12}$ & $\frac{3\pi}{2}$ & -  & $\frac{\pi}{2}$ & & $\frac{3\pi}{2}$\\
$l$   &   & $\frac{5\pi}{12}$& $\frac{7\pi}{12}$ & $\pi$ &  $\frac{3\pi}{2}$& - & & $\frac{3\pi}{2}$ \\
\hline
\end{tabular}
\end{center}
\renewcommand{\arraystretch}{1}
\end{table}

\begin{table}[!h]
\caption{Subsets of vertices $L_{ij}$ for every vector $\vec{ij} \in R_i$ in Figure \ref{fig:fig2} and every angle $\alpha$ in $\{ \pi, \frac{2\pi}{3}, \frac{3\pi}{2} \}$. Table extracted from \cite{cunha:2019}. }
\label{tab:L}
\renewcommand{\arraystretch}{1.35}
\begin{center}
\begin{tabular}{lcccc}
\hline
        &  & \multicolumn{3}{c}{$\alpha$} \\
             \cline{3-5}
$j$   & & $\pi$ & $\frac{2\pi}{3}$ &  $\frac{3\pi}{2}$ \\
$z$  & &   $\{t,l,z\}$ & $\{l,z \}$  & $\{u,t,l,z \}$            \\
$q$  & &   $\{l,z,q \}$  & $\{l,z,q \}$    & $\{t,l,z,q \}$             \\
$u$  &  &  $\{l,z,q,u\}$ & $\{z,q,u\}$  & $\{t,l,z,q,u\}$ \\
$t$   &   &  $\{q,u,t \}$ & $\{u,t \}$ & $\{l,z,q,u,t \}$                \\
$l$   &   &  $\{u,t,l \}$  & $\{t,l \}$   & $\{q,u,t,l \}$            \\
\hline
\end{tabular}
\end{center}
\renewcommand{\arraystretch}{1}
\end{table}

\section{IP formulations coming from the literature}
\label{sec:literature}

Given the definitions provided earlier, $\alpha-$MSTP consists in the following combinatorial optimization problem
\begin{align}
\min  \sum_{e \in  E_T}w_e \nonumber \\
  (V,E_T) \in {\cal T} \label{stform} \\
  \theta(\sphericalangle_T^i)  \leq \alpha & & \: i \in V. \label{eq:gangle0}
\end{align}

Two $\alpha-$MSTP IP formulations, $\mathcal{F}_{xy}$ and $\mathcal{F}_{x}$, were introduced in \cite{cunha:2019}. They use a binary vector $\mathbf{x} = \{x_e \in \mathbb{B}: e \in E\}$ of decision variables for selecting spanning tree edges. If $x_e = 1$, $e \in E$ is chosen for the spanning tree and $x_e = 0$ applying otherwise. To enforce
the spanning tree structure, both formulations impose that $\mathbf{x} \in \mathcal{F}_{\mathcal{T}}$, the latter being the polytope defined by the intersection of constraints \eqref{eq:cardtree}-\eqref{eq:xbound}.  It is widely known that $\mathcal{F}_{\mathcal{T}}$ in an integer polytope whose extreme points give the incidence vectors of the spanning trees of $G$ \cite{edmonds:1971}.

\begin{align}
\sum_{e \in E}x_e & = n - 1 & & \label{eq:cardtree} \\
\sum_{e \in E(S)}x_e & \leq |S| - 1 & & S \subset V, S \not = \emptyset \label{eq:sec} \\
x_e & \geq 0 & & e=\{i,j\} \in E. \label{eq:xbound} 
\end{align}

\noindent  The two formulations $\mathcal{F}_{xy}$ and $\mathcal{F}_{x}$ differ in how the generic $\alpha-$ACs \eqref{eq:gangle0} are represented by linear inequalities and by the use or not of an additional set of variables to enforce them. 

\subsection{Formulation ${\cal F}_{xy}$}

Formulation $\mathcal{F}_{xy}$ uses a second vector of decision variables, $\mathbf{y} = \{y_{ij} \in \mathbb{B}: i,j \in V, \: \vec{ij} \in R_i\}$, to enforce the $\alpha-$ACs. The role played by these variables can be easily understood if one resorts to the $\alpha-$MSTP application we highlighted before. These variables aim at aligning the directional antennas, allowing applicable pairs of points to communicate directly. The positioning of each antenna round $i$ can be represented by two extreme rays. The role of variables $\mathbf{y}$ is thus to locate the first of the two extreme rays for each $i \in V$. Accordingly, $y_{ij} = 1$ implies that the antenna placed at $i$ has the first of its two extreme transmission rays collinear with $\vec{ij}$. The other extreme ray is then positioned, anti-clockwise, $\alpha$ radians away from $\vec{ij}$. The antenna then concentrates power in the sector $[\measuredangle_{0ij},\measuredangle_{0ij} + \alpha]$.  In that case, an edge $\{i,l\}$ can be included in the spanning tree if $i$ {\it can see} $l$ (i.e., $\measuredangle_{0il}^j \in [\measuredangle_{0ij},\measuredangle_{0ij} + \alpha]$) and vice-versa.

Formulation $\mathcal{F}_{xy}$ is defined as the intersection of $\mathcal{F}_{\mathcal{T}}$ and 
\begin{align}
\sum_{\vec{ij} \in R_i}y_{ij} & = 1 & & i \in V \label{eq:assigny} \\
y_{ij} & \leq x_e & & i \in V, e=\{i,j\} \in \delta(i)               \label{eq:couplexy1} \\
x_e & \leq \sum_{k \in L_{ij}} y_{ik} & & i \in V, e=\{i,j\} \in \delta(i) \label{eq:allowed1} \\
y_{ij} & \geq 0 & & i \in V, \: \vec{ij} \in R_i. \label{eq:ybound}
\end{align}

Constraints \eqref{eq:assigny} impose that precisely one vector in $R_i$ defines the positioning of the first antenna ray, for every $i \in V$. Constraints \eqref{eq:allowed1} define which edges are admissible, depending on which variables $\mathbf{y}$ were activated. Constraints \eqref{eq:couplexy1} also couple variables $\mathbf{y}$ and $\mathbf{x}$. They state that $y_{ij}$ cannot be activated unless $x_{ij} = 1$ holds.

Cunha and Lucena \cite{cunha:2019} introduced the following $\alpha-$MSTP formulation\begin{equation}
\label{modelxy}
w^* = \min \left \{ \sum_{e \in E} w_e x_e: (\mathbf{x},\mathbf{y}) \in {\cal F}_{xy} \cap \mathbb{B}^{3m} \right \},
\end{equation}
\noindent that explicitly enforces variable $\mathbf{y}$ to be integer constrained.

\subsection{Formulation ${\cal F}_{x}$}

Differently from formulation  ${\cal F}_{xy}$, ${\cal F}_{x}$ redefines the generic $\alpha$-AC, \eqref{eq:gangle0}, as a set of exponentially many valid inequalities, solely based on variables $\mathbf{x}$. To explain how, 
consider a subset of edges $s(i) \subseteq \delta(i)$, $s(i) = \{ \{i,v_1\},\dots,\{i,v_p\} \}$, $p = |s(i)|$, indexed so that
\begin{equation}
\label{ordering}
\measuredangle_{0iv_1} \leq \cdots \leq \measuredangle_{0iv_p}
\end{equation}
applies and $v_0 \equiv v_p$ is used, for convenience. Cunha and Lucena \cite{cunha:2019} proved that if
\begin{equation}
\label{condgeral}
\bigwedge_{k=1}^{p}  \left\{ \measuredangle_{v_kiv_{k-1}} > \alpha \right\}
\end{equation}
\noindent or equivalently, if $\left \{v_1 \not \in L_{iv_p} \land v_2 \not \in L_{iv_1} \land \cdots \land v_{p} \not \in L_{iv_{p-1}}\right \}$ holds,
\begin{equation}
\label{restgeral}
|\delta_T(i) \cap s(i)| \leq |s(i)|-1
\end{equation}
is then valid for every $\alpha$-ST, since no angular sector of $\alpha$ radians encloses all edges in $s(i)$. 
The statement \eqref{restgeral} translates into the following set of exponentially many $\alpha-$MSTP valid inequalities
\begin{equation}
\label{cover}
\sum_{ e \in s(i)}x_e \leq |s(i)|-1, \: \: s(i) \subseteq \delta(i), \: \lvert s(i) \rvert \geq 2,  \: s(i) \: \hbox{ satisfies }
\eqref{ordering}-\eqref{condgeral}.
\end{equation}

In the remainder of the text, a subset $s(i) \subseteq \delta(i)$ with at least two edges, satisfying conditions \eqref{ordering} and \eqref{condgeral}, is called {\it non-admissible}. Conversely, any given set $s(i)$ satisfying the ordering \eqref{ordering}, for which \eqref{condgeral} does not hold, is called $\alpha$-ST {\it admissible}. Subsets with just one single edge are also admissible.

Cunha and Lucena \cite{cunha:2019} proved that $\alpha-$MSTP can be formulated as
\begin{equation}
\label{modelx}
w^* = \min \left \{ \sum_{e \in E} w_e x_e: \mathbf{x} \in {\cal F}_{x} \cap \mathbb{B}^{m} \right \},
\end{equation}

\noindent where $\mathcal{F}_x$ is the intersection of $\mathcal{F}_{\mathcal{T}}$ and the exponentially many inequalities \eqref{cover}. The authors also investigated $\mathcal{F}_{x}^{23}$, a relaxation for $\mathcal{F}_x$, that restricts the use of inequalities \eqref{cover} for subsets $s(i) \subseteq \delta(i)$ with only two or three edges. The authors showed that if $\alpha < \pi$, $\mathcal{F}_{x}^{23}$ defines an $\alpha-$MSTP formulation and, in addition, $\mathcal{F}_{x}^{23} = \mathcal{F}_{x}$ also holds. 

\section{Improved $\alpha-$MSTP formulations}
\label{sec:improvedformulations}

This section presents the improved formulations $\mathcal{F}_{xy}^*$ and $\mathcal{F}_{x}^{++}$, that respectively build on formulations $\mathcal{F}_{xy}$ and $\mathcal{F}_{x}$, discussed earlier. The first part of this section suggests minor changes to $\mathcal{F}_{xy}$ that lead to an equally strong formulation, $\mathcal{F}_{xy}^*$. The second part presents new $\alpha-$MSTP valid inequalities, used to reinforce $\mathcal{F}_{x}$. Some of these new inequalities are already satisfied by $\mathcal{F}_{xy}$. Others, as our numerical results demonstrate, are not.
\subsection{Improvements on formulation ${\cal F}_{xy}$.}

Define ${\cal F}_{xy}^*$ as the intersection of inequalities \eqref{eq:cardtree}-\eqref{eq:assigny}, \eqref{eq:allowed1}-\eqref{eq:ybound}. The improvements come from two results, to be demonstrated in the sequence: (i) variables $\mathbf{y}$ do not need to be integer constrained and (ii) the removal of inequalities \eqref{eq:couplexy1} do not impact on the LPR bounds $w({\cal F}_{xy})$. 

To address the first result, consider the following remark. For a given $(\overline{\mathbf{x}},\overline{\mathbf{y}}) \in {\cal F}_{xy}: \mathbf{x} \in \mathbb{B}^m$, define $\delta_T(i) := \{ \{i,j\} \in \delta(i): \overline{x}_{ij} = 1\}$. Then, there exists at least one edge $\{i, j\} \in \delta_T(i)$ such that $j \in \bigcap_{\{i,v\} \in \delta_T(i)}L_{iv}$ if $\alpha \geq \pi$ and, if $\alpha < \pi$ there is exactly one edge $\{i,j\} \in \delta_T(i)$, such that $j \in \bigcap_{\{i,v\} \in \delta_T(i)} L_{iv}$. To see that such an observation applies, assume the contrary, i.e., there is no $j \in \bigcap_{\{i,v\} \in \delta_T(i)} L_{iv}$. Summing up inequalities \eqref{eq:allowed1} for the edges in $\delta_T(i)$ and recalling that $(\overline{\mathbf{x}},\overline{\mathbf{y}}) \in \mathcal{F}_{xy}$, we have:
\begin{align*}
|\delta_T(i)| & \leq  \sum_{\{i,v\} \in \delta_T(i)} \sum_{ k \in L_{iv}}\overline{y}_{ik} & & \\
                   & = \sum_{\{i,v\} \in \delta_T(i)} \sum_{ k \in L_{iv}|\{i,k\} \in \delta_T(i)}\overline{y}_{ik} & & \\
                & \leq (|\delta_T(i)-1)| \sum_{ k \in \bigcup_{\{i,v\} \in \delta_T(i)}L_{iv}}\overline{y}_{ik} \\
                & =  |\delta_T(i)|-1   
\end{align*}
\noindent a contradiction follows. Now note that if $\alpha < \pi$ holds, no two edges $\{i,j\}$ and $\{i,k\}$ simultaneously satisfy $\measuredangle_{jik} \leq \alpha$ and $\measuredangle_{kij} \leq \alpha$. Thus, there is exactly one vertex $j \in \bigcap_{\{i,v\} \in \delta_T(i)} L_{iv}$, for the $\alpha < \pi$ case. We can now state the following two propositions.

\begin{proposition}
\label{prop:case1}
If $\alpha < \pi$ and $(\overline{\mathbf{x}},\overline{\mathbf{y}}) \in {\cal F}_{xy}: \overline{\mathbf{x}} \in \mathbb{B}^m$, then $\overline{\mathbf{y}} \in \mathbb{B}^{2m}$. 
\end{proposition}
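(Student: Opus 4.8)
The plan is to fix an arbitrary vertex $i \in V$ and show that the subvector $(\overline{y}_{ik})_{\vec{ik} \in R_i}$ is $0/1$; since $i$ is arbitrary, this yields $\overline{\mathbf{y}} \in \mathbb{B}^{2m}$. Throughout I would keep the notation $\delta_T(i) = \{\{i,j\} \in \delta(i): \overline{x}_{ij} = 1\}$ from the remark preceding the statement, using the hypothesis $\overline{\mathbf{x}} \in \mathbb{B}^m$ so that $\delta_T(i)$ is a genuine edge set.

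First I would localize the support of $\overline{\mathbf{y}}$ by means of the coupling constraints. For any $k$ with $\{i,k\} \notin \delta_T(i)$ we have $\overline{x}_{ik} = 0$, so \eqref{eq:couplexy1} together with \eqref{eq:ybound} forces $\overline{y}_{ik} = 0$. Hence the only possibly nonzero components $\overline{y}_{ik}$ correspond to tree neighbours of $i$, and by \eqref{eq:assigny} these sum to $1$. Next I would confine that support further. Fixing any $\{i,v\} \in \delta_T(i)$, the equality $\overline{x}_{iv} = 1$ turns \eqref{eq:allowed1} into $1 \leq \sum_{k \in L_{iv}} \overline{y}_{ik}$, whereas \eqref{eq:assigny} gives $\sum_{k} \overline{y}_{ik} = 1$; subtracting and invoking nonnegativity \eqref{eq:ybound} yields $\overline{y}_{ik} = 0$ for every $k \notin L_{iv}$. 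As this holds for every tree edge $\{i,v\}$, the support of $(\overline{y}_{ik})_k$ lies inside $\bigcap_{\{i,v\} \in \delta_T(i)} L_{iv}$, and by the first step it lies among the tree neighbours of $i$ as well.

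To finish, I would invoke the remark established just before the proposition: for $\alpha < \pi$ there is exactly one tree neighbour $j^\ast$ with $j^\ast \in \bigcap_{\{i,v\} \in \delta_T(i)} L_{iv}$. Combining this uniqueness with the support containment just obtained gives $\overline{y}_{ik} = 0$ for all $k \neq j^\ast$, so that \eqref{eq:assigny} forces $\overline{y}_{ij^\ast} = 1$. Every component $\overline{y}_{ik}$ is therefore $0$ or $1$, as desired.

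Given that the preceding remark does the geometric heavy lifting, the argument is largely mechanical; the one point deserving care is that collapsing the support to a \emph{single} vertex requires both ingredients simultaneously. The coupling constraints \eqref{eq:couplexy1} are needed to discard non-neighbour directions, since $\bigcap_{\{i,v\}} L_{iv}$ could a priori contain vertices $k$ with $\{i,k\} \notin \delta_T(i)$, while the $\alpha < \pi$ uniqueness from the remark is what rules out two or more surviving tree neighbours. That uniqueness rests on the fact that for $\alpha < \pi$ no two distinct edges can satisfy $\measuredangle_{jik} \leq \alpha$ and $\measuredangle_{kij} \leq \alpha$ at once; for $\alpha \geq \pi$ this fails, the support is only bounded rather than pinned down, and the conclusion need not hold, consistent with the proposition being stated only for $\alpha < \pi$.
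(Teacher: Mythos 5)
Your proof is correct, but it proceeds by a different mechanism than the paper's. The paper's proof is a proof by contradiction via aggregation: it picks the vertex $v_1 \in \bigcap_{\{i,v\} \in \delta_T(i)} L_{iv}$ supplied by the preceding remark, assumes $\overline{y}_{iv_1} \neq 1$, and sums the constraints \eqref{eq:allowed1} over all edges of $\delta_T(i)$, bounding the aggregate by $|\delta_T(i)|\,\overline{y}_{iv_1} + (|\delta_T(i)|-1)(1-\overline{y}_{iv_1}) = |\delta_T(i)| - 1 + \overline{y}_{iv_1} < |\delta_T(i)|$ — the key counting observation being that any $k \neq v_1$ lies in at most $|\delta_T(i)|-1$ of the sets $L_{iv}$. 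You instead exploit each constraint \eqref{eq:allowed1} \emph{individually}: for a tree edge, $\overline{x}_{iv}=1$ makes the constraint tight against \eqref{eq:assigny}, so nonnegativity kills all mass outside $L_{iv}$, and intersecting over tree edges (together with \eqref{eq:couplexy1} to discard non-neighbour directions, a step the paper performs implicitly inside the remark's chain of inequalities) pins the support inside the tree-neighbour part of $\bigcap_{\{i,v\} \in \delta_T(i)} L_{iv}$; the remark's uniqueness for $\alpha < \pi$ then collapses the support to a single vertex and \eqref{eq:assigny} sets that coordinate to $1$. Your route is more elementary — it avoids the aggregate coefficient-counting bound, which is the one delicate step in the paper's argument — and it has the small bonus that the \emph{existence} half of the remark comes out for free (the localized support must carry total mass $1$, hence is nonempty), so you only ever need the uniqueness half; both arguments, of course, ultimately rest on the same geometric fact that for $\alpha < \pi$ no two edges satisfy $\measuredangle_{jik} \leq \alpha$ and $\measuredangle_{kij} \leq \alpha$ simultaneously.
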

\begin{proof}
Pick the edge $\{i,v_1\} \in \delta_T(i)$ such that $v_1 \in   \bigcap_{\{i,v\} \in \delta_T(i)}L_{iv}$. Now suppose $\overline{y}_{iv_1} \not = 1$. Again, summing up inequalities \eqref{eq:allowed1} for each $\{i,v\} \in \delta_T(i)$ we have: 
\begin{align*}
|\delta_T(i)| & \leq \sum_{\{i,v\} \in \delta_T(i)} \sum_{ k \in L_{iv}|\{i,k\} \in \delta_T(i)}\overline{y}_{ik} & & \\
                & \leq |\delta_T(i)| \overline{y}_{iv_1} + (|\delta_T(i)|-1)\sum_{ k \in \left ( \bigcup_{\{i,v\} \in \delta_T(i)}L_{iv}\right ) \setminus \{v_1\} }\overline{y}_{ik} \\
                & = |\delta_T(i)| \overline{y}_{iv_1} + (|\delta_T(i)|-1)(1- \overline{y}_{iv_1})\\
                & = |\delta_T(i)| - 1 + \overline{y}_{iv_1} \\
              & < |\delta_T(i)|
\end{align*}
\noindent and we have a contradiction. 
Thus, $\overline{y}_{iv} = 0$ for any $\vec{iv} \in R_i \setminus \{ \vec{iv_1} \}$ and $(\overline{\mathbf{x}},\overline{\mathbf{y}}) \in {\cal F}_{xy} \cap \mathbb{B}^{3m}$.
\end{proof}

\begin{proposition}
\label{prop:case2}
If $\alpha \geq \pi$ and $(\overline{\mathbf{x}},\overline{\mathbf{y}}) \in {\cal F}_{xy}: \overline{\mathbf{x}} \in \mathbb{B}^m$, then there exists $\hat{\mathbf{y}} \in \mathbb{B}^{2m}$ such that $(\overline{\mathbf{x}},\hat{\mathbf{y}}) \in {\cal F}_{xy} \cap \mathbb{B}^{3m}$. 
\end{proposition}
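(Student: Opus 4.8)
The plan is to build an explicit binary $\hat{\mathbf{y}}$ directly from $\overline{\mathbf{x}}$, one vertex at a time, relying on the remark established just before Proposition \ref{prop:case1}. Writing $\delta_T(i) = \{\{i,v\} \in \delta(i): \overline{x}_{iv} = 1\}$ as there, that remark guarantees, in the case $\alpha \geq \pi$ and for each $i \in V$, the existence of a tree edge $\{i,v_1\} \in \delta_T(i)$ whose other endpoint satisfies $v_1 \in \bigcap_{\{i,v\} \in \delta_T(i)} L_{iv}$. First I would, for every $i \in V$, fix one such endpoint $v_1 = v_1(i)$ and set $\hat{y}_{iv_1} := 1$ together with $\hat{y}_{ik} := 0$ for all $\vec{ik} \in R_i \setminus \{\vec{iv_1}\}$. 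This already places $\hat{\mathbf{y}}$ in $\mathbb{B}^{2m}$ and makes the assignment equalities \eqref{eq:assigny} and the bounds \eqref{eq:ybound} hold by construction.

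It then remains to check that $(\overline{\mathbf{x}}, \hat{\mathbf{y}})$ satisfies the coupling constraints \eqref{eq:couplexy1} and \eqref{eq:allowed1}; the constraints defining $\mathcal{F}_{\mathcal{T}}$ depend only on $\mathbf{x}$, which is untouched, so they carry over unchanged. For \eqref{eq:couplexy1}, the single active variable $\hat{y}_{iv_1} = 1$ is coupled with $\overline{x}_{iv_1} = 1$, since $\{i,v_1\} \in \delta_T(i)$, while every other $\hat{y}_{ik} = 0$ trivially obeys its bound. For \eqref{eq:allowed1}, the only binding case is an edge $\{i,j\} \in \delta_T(i)$, where I would invoke $v_1 \in \bigcap_{\{i,v\} \in \delta_T(i)} L_{iv} \subseteq L_{ij}$ to obtain $\sum_{k \in L_{ij}} \hat{y}_{ik} \geq \hat{y}_{iv_1} = 1 = \overline{x}_{ij}$; when $\overline{x}_{ij} = 0$ the inequality is immediate. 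This yields $(\overline{\mathbf{x}}, \hat{\mathbf{y}}) \in \mathcal{F}_{xy} \cap \mathbb{B}^{3m}$, as required.

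The crux of the argument is the verification of \eqref{eq:allowed1}: it is precisely the intersection property $v_1 \in \bigcap_{\{i,v\} \in \delta_T(i)} L_{iv}$ supplied by the remark that guarantees the single ray chosen at $i$ simultaneously admits every tree neighbor of $i$. I expect no real obstacle beyond invoking this property correctly, since the construction is explicit and the checks are direct. The conceptual point worth emphasizing, in contrast with Proposition \ref{prop:case1}, is that for $\alpha \geq \pi$ the given $\overline{\mathbf{y}}$ may be genuinely fractional and the admissible endpoint $v_1$ need not be unique; the content of the proposition is therefore the \emph{existence} of a consistent integral assignment rather than the integrality of $\overline{\mathbf{y}}$ itself.
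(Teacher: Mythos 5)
Your proposal is correct and is essentially identical to the paper's own proof: the paper likewise picks, for each $i \in V$, an edge $\{i,v_1\}$ with $v_1 \in \bigcap_{\{i,v\} \in \delta_T(i)} L_{iv}$ (guaranteed by the remark preceding Proposition \ref{prop:case1}), sets $\hat{y}_{iv_1} = 1$ and all other $\hat{y}_{ij} = 0$, and concludes. The only difference is that you spell out the constraint-by-constraint verification of \eqref{eq:assigny}--\eqref{eq:ybound} that the paper compresses into ``and the result follows,'' which is a faithful and correct elaboration.
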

\begin{proof}
Pick $\{i,v_1\}$ such that $v_1 \in   \bigcap_{\{i,v\} \in \delta_T(i)}L_{iv}$. Now set $\hat{y}_{iv_1} = 1$ and $\hat{y}_{ij} = 0$ for all vectors $\vec{ij}$ such that $\{i,j\} \in \delta(i) \setminus \{i,v_1\}$ and the result follows.
\end{proof}

As a consequence of Propositions \ref{prop:case1} and \ref{prop:case2}, any vector $\overline{\mathbf{x}} \in \mathbb{B}^m$  for which there is an associated $\overline{\mathbf{y}} \in [0,1]^{2m}$ satisfying $(\overline{\mathbf{x}},\overline{\mathbf{y}}) \in {\cal F}_{xy}$ defines the incidence vector of an $\alpha-$ST of $G$. Thus, variables $\mathbf{y}$ do not need to be explicitly enforced to assume only integer values.

Before showing the second result, consider the case where $\alpha = \pi$, $s(i) = \delta(i) = \{ \{i,j\}, \{i,k\} \}$ and $\measuredangle_{jik} = \measuredangle_{kij} = \pi$. Consider the point $(\overline{\mathbf{x}},\overline{\mathbf{y}})$ such that  $\overline{x}_{ij} = 1-\epsilon, \overline{x}_{ik} = \epsilon$ and $\overline{y}_{ij} = \epsilon, \overline{y}_{ik} = 1 - \epsilon$ for any $\epsilon \in [0,1]$. Note that the point does satisfy constraints \eqref{eq:assigny}, \eqref{eq:allowed1}-\eqref{eq:ybound} and $s(i)$ is a set of admissible edges.  However,  constraint $y_{ik} \leq x_{ik}$ is violated if $\epsilon \in [0,\frac{1}{2})$ and constraint $y_{ij} \leq x_{ij}$ is violated if $\epsilon \in (\frac{1}{2},1]$. The example thus shows that $\mathcal{F}_{xy}$ may be strictly contained in $\mathcal{F}_{xy}^*$. 

\begin{proposition} For any $(\overline{\mathbf{x}},\overline{\mathbf{y}}) \in {\cal F}_{xy}^*$, there is a vector $\hat{\mathbf{y}} \in [0,1]^{2m}$ such that $(\overline{\mathbf{x}},\hat{\mathbf{y}}) \in {\cal F}_{xy}$ and thus $w({\cal F}_{xy}) = w({\cal F}_{xy}^*)$ applies. 
\end{proposition}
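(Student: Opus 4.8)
The plan is to prove the stronger statement that the projections of $\mathcal F_{xy}$ and $\mathcal F_{xy}^*$ onto the $\mathbf x$ space coincide; since the objective $\sum_{e}w_e x_e$ depends only on $\mathbf x$ and $\mathcal F_{xy}\subseteq \mathcal F_{xy}^*$ already gives $w(\mathcal F_{xy})\ge w(\mathcal F_{xy}^*)$, the equality $w(\mathcal F_{xy})=w(\mathcal F_{xy}^*)$ follows as soon as every $\overline{\mathbf x}$ admitting a companion $\overline{\mathbf y}$ with $(\overline{\mathbf x},\overline{\mathbf y})\in\mathcal F_{xy}^*$ also admits some $\hat{\mathbf y}\in[0,1]^{2m}$ with $(\overline{\mathbf x},\hat{\mathbf y})\in\mathcal F_{xy}$. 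The decisive simplification is that constraints \eqref{eq:assigny}, \eqref{eq:allowed1}, \eqref{eq:couplexy1} and \eqref{eq:ybound} separate over the vertices $i\in V$: the variables $\{y_{ij}:\vec{ij}\in R_i\}$ of distinct vertices never share an inequality. Hence it suffices to fix a single $i\in V$ and, keeping $\overline{\mathbf x}$ unchanged, construct nonnegative values $\hat y_{ij}$ ($\vec{ij}\in R_i$) satisfying \eqref{eq:assigny}, \eqref{eq:allowed1} and the coupling \eqref{eq:couplexy1} that $\mathcal F_{xy}^*$ drops.

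Two elementary facts feed the construction. First, the given $\overline{\mathbf y}$ already witnesses, for this $i$, that $\sum_{\vec{ij}\in R_i}\overline y_{ij}=1$ and $\sum_{k\in L_{ij}}\overline y_{ik}\ge \overline x_{ij}$ for every $\{i,j\}\in\delta(i)$; only the caps $\hat y_{ij}\le \overline x_{ij}$ of \eqref{eq:couplexy1} remain to be imposed. Second, $\overline{\mathbf x}\in\mathcal F_{\mathcal T}$ forces $\sum_{\{i,j\}\in\delta(i)}\overline x_{ij}\ge 1$: applying \eqref{eq:sec} to $S=V\setminus\{i\}$ and subtracting from \eqref{eq:cardtree} yields $\sum_{e\in\delta(i)}\overline x_e=(n-1)-\sum_{e\in E(V\setminus\{i\})}\overline x_e\ge 1$, so the cap system has enough total budget to host a unit of $\mathbf y$-mass. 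I would then rewrite the covering inequalities \eqref{eq:allowed1} in complementary form: using $\sum_{k}\hat y_{ik}=1$, the requirement $\sum_{k\in L_{ij}}\hat y_{ik}\ge \overline x_{ij}$ is equivalent to the upper bound $\sum_{k\in R_i\setminus L_{ij}}\hat y_{ik}\le 1-\overline x_{ij}$. After this rewriting, every nontrivial restriction on $\hat{\mathbf y}$ is an upper bound on a subset sum (the singleton caps $\overline x_{ik}$ and the complement-arc caps $1-\overline x_{ij}$), together with the single equality $\sum_{k}\hat y_{ik}=1$ and nonnegativity.

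The existence of such $\hat{\mathbf y}$ is therefore a transportation-type feasibility question: can a unit of mass be spread over the directions in $R_i$ subject to these subset-sum capacities? I would settle it by a max-flow/min-cut (equivalently, Gale--Hoffman or LP-duality) argument, showing that the maximum mass packable under the capacities is at least $1$ and then scaling the packing down to total exactly $1$, which only relaxes the upper bounds. By LP duality this reduces to checking a fractional-cover inequality over the capacitated sets: the complement-arc capacities are discharged by testing the cover against $\overline{\mathbf y}$ (whose coverage of each $L_{ij}$ is exactly the needed slack), while the singleton capacities are discharged by the degree bound $\sum_{\{i,j\}\in\delta(i)}\overline x_{ij}\ge 1$. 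The main obstacle is precisely the interaction of these two capacity types: $\overline{\mathbf y}$ certifies the arc bounds but may itself violate the singleton caps, so neither witness alone suffices, and the verification must combine them. Here I would exploit the fact that the sets $L_{ij}$ are circular arcs of common angular width $\alpha$ (a consecutive-ones structure on the directions ordered by $\measuredangle_{0ij}$), which renders the relevant cuts interval-like and lets the two certificates be merged into a single valid cover bound.

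Once feasibility is established for each $i\in V$ independently, assembling the per-vertex solutions produces $\hat{\mathbf y}\in[0,1]^{2m}$ with $(\overline{\mathbf x},\hat{\mathbf y})\in\mathcal F_{xy}$. This shows that the projection of $\mathcal F_{xy}^*$ onto the $\mathbf x$ space is contained in that of $\mathcal F_{xy}$, hence the two projections are equal, and therefore $w(\mathcal F_{xy})=w(\mathcal F_{xy}^*)$, as claimed.
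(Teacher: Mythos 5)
Your reduction is sound as far as it goes: the constraints \eqref{eq:assigny}, \eqref{eq:allowed1}--\eqref{eq:ybound} do separate over vertices; the complementary rewriting of \eqref{eq:allowed1} under $\sum_{k}\hat{y}_{ik}=1$ is correct; the degree bound $\sum_{e\in\delta(i)}\overline{x}_e\ge 1$ follows from \eqref{eq:cardtree} and \eqref{eq:sec} exactly as you say; and scaling a packing of mass at least $1$ down to mass exactly $1$ only relaxes the upper bounds. But the argument stops precisely at its pivotal step. In dual form you must show that every fractional cover $(u,w)\ge 0$ with $u_k+\sum_{j:\,k\in A_j}w_j\ge 1$ for all $k$ (where $A_j:=R_i\setminus L_{ij}$) has cost $\sum_k \overline{x}_{ik}u_k+\sum_j(1-\overline{x}_{ij})w_j\ge 1$, and the ``merge the two certificates'' you invoke does not come for free. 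Discharging the arc terms with $\overline{\mathbf{y}}$ gives $\sum_j(1-\overline{x}_{ij})w_j\ge\sum_k\overline{y}_{ik}W_k$ with $W_k:=\sum_{j:\,k\in A_j}w_j$, and discharging the singleton terms gives $\sum_k\overline{x}_{ik}\max\{0,\,1-W_k\}$; the combined lower bound $\sum_k\overline{x}_{ik}\max\{0,\,1-W_k\}+\sum_k\overline{y}_{ik}W_k$ can a priori fall below $1$ wherever $\overline{y}_{ik}<\overline{x}_{ik}$ on directions with $W_k>0$ --- which is exactly the regime that makes the proposition nontrivial, since $\overline{\mathbf{y}}$ may violate the caps \eqref{eq:couplexy1}. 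Closing this requires genuinely exploiting that the $A_j$ are circular arcs, and note that circular-arc (as opposed to interval) incidence matrices are not totally unimodular in general, so the max-flow/Gale--Hoffman machinery you name is not off the shelf here either. As written, the proposal correctly names the obstacle and then asserts, without argument, that it can be overcome.

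For comparison, the paper avoids duality altogether with an explicit per-vertex redistribution: it renames the edges of $\delta(i)$ so that $\{i,v_1\}$ maximizes the excess $\overline{y}_{iu}-\overline{x}_{iu}$, then sweeps anti-clockwise, at step $k$ pushing the excess $\eta^k=\max\{0,\hat{y}_{iv_k}-\overline{x}_{iv_k}\}$ onto the next direction $\hat{y}_{iv_{k+1}}$. Two facts make this work, and they are precisely the concrete content your sketch lacks: the degree bound $\sum_{k=1}^{p}\overline{x}_{iv_k}\ge 1$ together with $\sum_{k=1}^{p}\overline{y}_{iv_k}=1$ forces $\eta^p=0$, so all caps \eqref{eq:couplexy1} hold at termination; and the consecutive-arc property (if $v_k\in L_{iv_{k+2}}$ then $v_{k+1}\in L_{iv_{k+2}}$) guarantees that excess pushed anti-clockwise cannot exit an arc $L_{iv}$ whose coverage requirement \eqref{eq:allowed1} was met by $\overline{\mathbf{y}}$, so those inequalities are preserved. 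If you insist on the LP-duality route, proving the combined cover inequality will in effect re-derive this sweeping/uncrossing structure on circular arcs; the paper's constructive algorithm is the shorter path, and you would need to supply an argument of comparable substance to complete yours.
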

\begin{proof}
We provide a constructive proof, based on the algorithm below. The algorithm receives $i \in V$ and $\overline{\mathbf{y}}_i := \{ \overline{y}_{ij}: \vec{ij} \in R_i\}$, as input, and outputs vector $\hat{\mathbf{y}}_i$. After calling the algorithm $n$ times, each one for a different $i \in V$, the vector $\hat{\mathbf{y}} := (\hat{\mathbf{y}}_1,\dots,\hat{\mathbf{y}}_n)$ has the desired property. \\ 

Algorithm: 
\begin{enumerate}
\item (Early termination checking) If there is no edge $\{i,j\} \in \delta(i): \overline{y}_{ij} > \overline{x}_{ij}$, then stop. Otherwise, move on to the next step.
\item (Initialization) $\hat{y}_{ij} \leftarrow \overline{y}_{ij}: \vec{ij} \in R_i$
\item (Renaming edges) Define $\{i,j\} := \arg\max_{\{i,u\} \in \delta(i)} \{ \overline{y}_{iu} - \overline{x}_{iu}\}$ and do:
\begin{enumerate}
\item Rename edge $\{i,j\}$ as $\{i,v_1\}$.
\item Starting from $\{i,v_1\}$, rotate anti-clockwise round $i$ and re-name the remaining edges in $\delta(i)$ as $\{i,v_2\},\{i,v_3\},\cdots,\{i,v_p\}$, so that $$\measuredangle_{0iv_1}^{v_2} \leq \measuredangle_{0iv_1}^{v_3} \leq \cdots \leq \measuredangle_{0iv_{1}}^{v_p},$$ where $p = |\delta(i)|$. For convenience, denote $\{i,v_{p+1}\} = \{i,v_1\}$.
\end{enumerate}
\item For each $k = 1,\dots,p$ do
\begin{enumerate}
\item Calculate $\eta^k = \max \{0,\hat{y}_{iv_k} - \overline{x}_{iv_k}\}$.
\item Update:
\begin{align}
\hat{y}_{iv_k} & \leftarrow \hat{y}_{iv_k} - \eta^k \\
\hat{y}_{iv_{k+1}} & \leftarrow \hat{y}_{iv_{k+1}} + \eta^k
\end{align}
\end{enumerate}
\item Restore the original edge names and vector $\hat{\mathbf{y}}_i$ accordingly. 
\end{enumerate}

The very first observation is that the algorithm above does not change the $\overline{\mathbf{x}}$ component of the solution. Thus, the costs of $(\overline{\mathbf{x}},\hat{\mathbf{y}})$ and $(\overline{\mathbf{x}},\overline{\mathbf{y}})$ are identical. Our claim is that, after calling the algorithm above for each $i \in V$, $(\overline{\mathbf{x}},\hat{\mathbf{y}}) \in {\cal F}_{xy}$ and the result follows. To show that, consider the following arguments:  
\begin{itemize}
\item $\hat{\mathbf{y}}_i$ satisfies constraints \eqref{eq:assigny}. \\ At each of the $p$ iterations of the algorithm above, $\sum_{\vec{ij} \in R_i}\hat{y}_{ij} = 1$ holds. That applies due to the initialization and because $\eta^k$ is always added and subtracted respectively to $\hat{y}_{iv_{k+1}}$ and from $\hat{y}_{iv_{k}}$.  
\item $(\overline{\mathbf{x}}_i,\hat{\mathbf{y}}_i)$ satisfies constraints \eqref{eq:couplexy1}. \\ It is quite clear that after the updates in step 4-(b), $\hat{y}_{iv_k} \leq \overline{x}_{iv_k}$  for each $k = 1,\dots,p-1$. 
We now show that after the $p-$th update in step 4-(b), when $\hat{y}_{iv_{p+1}}$ is updated (and thus when $\hat{y}_{iv_{1}}$ is updated for the second time), we have $\hat{y}_{iv_1} = \hat{y}_{iv_{p+1}} = \overline{x}_{iv_1}$ since $\eta^p = 0$.  We have that $\sum_{k = 1}^p\overline{y}_{iv_k} = 1$ and, because $\overline{\mathbf{x}} \in \mathcal{F}_{\mathcal{T}}$, $\sum_{k=1}^{p} {\overline{x}}_{iv_k} \geq 1$ holds (undirected cutset constraints are satisfied when SECs are satisfied). Thus, in the last update, when $k = p$, there is no excess $\max\{0,\hat{y}_{iv_p}-\overline{x}_{iv_p}\}$ left to be transferred and $\eta^p = 0$. In fact,  at the $p-$th iteration, $\hat{y}_{iv_1}$ is actually not updated, remaining at its previous value.
\item $(\overline{\mathbf{x}}_i,\hat{\mathbf{y}}_i)$ satisfies constraints \eqref{eq:allowed1}. \\ 
Consider three consecutive vectors in $R_i$: $\vec{iv_k}, \vec{iv}_{k+1}, \vec{iv}_{k+2}$. Note that if $v_k \in L_{iv_{k+2}}$, then $v_{k+1} \in L_{iv_{k+2}}$. Now notice that starting with $k = 1$, at each of the $p$ updates, the excess $\max \{0, \hat{y}_{iv_k} - \overline{x}_{iv_k}\}$ with respect to $\overline{x}_{iv_k}$ is always pushed anti-clockwise round $i$, to the next variable $\hat{y}_{iv_{k+1}}$. Thus, if the amount $\sum_{u \in L_{iv_k}}\overline{y}_{iv_u}$ at least matched $\overline{x}_{iv_k}$ because $(\overline{\mathbf{x}},\overline{\mathbf{y}}) \in {\cal F}_{xy}$, the amount  $\sum_{u \in L_{iv_u}}\hat{y}_{iv_k}$ could never become smaller than $\overline{x}_{iv_k}$ and inequalities \eqref{eq:allowed1} are satisfied.
\end{itemize}
\end{proof}

Assume that $\epsilon \in (\frac{1}{2},1]$ and consider the point $(\overline{\mathbf{x}},\overline{\mathbf{y}})$ provided above.   After the application of the algorithm, one would have: $\eta^1 = 2\epsilon - 1 > 0$, $\eta^2 = 0$, $\hat{y}_{ij} = 1 - \epsilon = \overline{x}_{ij}$ and $\hat{y}_{ik} = \epsilon = \overline{x}_{ij}$. Note that $(\overline{\mathbf{x}},\hat{\mathbf{y}}) \in \mathcal{F}_{xy}$.

As a consequence of the results presented in this section, one can solve $\alpha-$MSTP by solving the following mixed integer program:
\begin{equation}
\label{modelxystar}
w^* = \min \left \{ \sum_{e \in E} w_e x_e: (\mathbf{x},\mathbf{y}) \in {\cal F}_{xy}^* \cap (\mathbb{B}^{m} \times \mathbb{R}^{2m}) \right \}.
\end{equation}


\subsection{New $\alpha-$MSTP valid inequalities.}

In this section, we present new $\alpha-$MSTP valid inequalities. All of them are written in the $\mathbf{x}$ space and are used to strengthen formulation $\mathcal{F}_x$. The first family of valid inequalities is discussed in Subsection \ref{sec:lifting}. We show that these inequalities are satisfied by the projection of ${\cal F}_{xy}^*$ (and thus of ${\cal F}_{xy}$) onto the $\mathbf{x}$ space. The second family of valid inequalities was characterized by identifying a Stable Set structure in $\alpha-$ STs. Our computational results presented later on in the paper show  that inequalities in the second set are not satisfied by points in ${\cal F}_{xy}$, since, bounds $w(\mathcal{F}_{x}^{++})$ exceed $w(\mathcal{F}_{xy})$ counterparts, for instances in our test bed.

\subsubsection{Lifting valid inequalities \eqref{cover}}
\label{sec:lifting}

The first family of valid inequalities presented here is a lifting of inequalities \eqref{cover}. Given $\alpha$, $i \in V$ and $s(i) \subseteq \delta(i), s(i) \not = \emptyset$, define 
\begin{equation*}
v_{ij}^{(s(i),\alpha)} := \left | \left\{ \{i,k\} \in s(i): \measuredangle_{jik} \leq \alpha \right \} \right |
\end{equation*}
\noindent as the number of edges of $s(i)$ {\it covered} by $\{i,j\}$ and 
\begin{equation*}
v^{(s(i),\alpha)} := \max \{ v_{ij}^{(s(i),\alpha)}: \{i,j\} \in s(i)\}
\end{equation*}
\noindent as the maximum number of edges of $s(i)$ covered by an edge of $s(i)$. Quite clearly, $v^{(s(i),\alpha)}$ gives the maximum number of edges of $s(i)$ that can be included in an $\alpha-$ST. To check that, consider the vector $\vec{iu}$ associated to the edge
\begin{equation*}
\{i,u\} \in \arg \max \{ v_{ij}^{(s(i),\alpha)}: \{i,j\} \in s(i)\}.
\end{equation*}
The angular sector that starts at $\vec{iu}$ and rotates $\alpha$ radians anti-clockwise round $i$  encloses precisely $v^{(s(i),\alpha)}$ edges of $s(i)$. Thus, inequalities 
\begin{equation}
\label{eq:strengthen1}
\sum_{e \in s(i)}x_e \leq v^{(s(i),\alpha)}
\end{equation}
\noindent are valid for $\alpha-$MSTP. Notice that for any subset of edges $s(i): |s(i)| \geq 2$ satisfying \eqref{ordering}-\eqref{condgeral}, inequalities \eqref{eq:strengthen1} are at least as strong as \eqref{cover} since $v^{(s(i),\alpha)} \leq |s(i)|-1$. 

Inequalities \eqref{eq:strengthen1} can be lifted to a stronger form, as follows. Consider a set $s(i) \subset \delta(i)$ and an edge $\{i,u\} \in \delta(i) \setminus s(i)$. If $v^{(s(i) \cup \{i,u\},\alpha)} \leq v^{(s(i),\alpha)}$ holds, then
\begin{equation}
\label{eq:strengthen2}
x_{iu} + \sum_{e \in s(i)}x_e \leq v^{(s(i),\alpha)}
\end{equation}
is also valid for $\alpha-$MSTP and is stronger than \eqref{eq:strengthen1}. 

To illustrate the difference among inequalities \eqref{cover}, \eqref{eq:strengthen1} and \eqref{eq:strengthen2}, consider the edges in Figure \ref{fig:fig1} and their unitary vectors indicated in Figure \ref{fig:fig2}. Define $s(i) = \{ \{i,z\}, \{i,u\}, \{i,t\} \}$ and note that $\measuredangle_{ziu} = \frac{7\pi}{12}, \measuredangle_{uit} = \frac{\pi}{2}, \measuredangle_{tiz} = \frac{11\pi}{12}$. Considering the $\alpha = \frac{\pi}{3}$ case, inequality \eqref{cover} then reads $x_{iz} + x_{iu} + x_{it} \leq 2$, since the set of edges $s(i)$, ordered according to \eqref{ordering}, satisfy \eqref{condgeral}. Note that $v_{ik}^{(s(i),\frac{\pi}{3})} = 1$ for every $\{i,k\} \in s(i)$, and thus inequality \eqref{eq:strengthen1} reads as $x_{iz} + x_{iu} + x_{it} \leq 1$.  It turns out that the latter inequality can be lifted to $x_{il} + x_{iz} + x_{iu} + x_{it} \leq 1$ since $v^{(s(i) \cup \{i,l\},\frac{\pi}{3})} = v^{(s(i),\frac{\pi}{3})} = 1$. No additional strengthening can be carried out, since $v^{(s(i) \cup \{i,l\} \cup \{i,q\},\frac{\pi}{3})} = 2$, as $\measuredangle_{ziq} = \frac{\pi}{6} < \frac{\pi}{3} = \alpha$. 

Consider thus the following family of $\alpha-$MSTP valid inequalities, named Lifted Angular Constraints (LACs):
\begin{align}
\sum_{e \in s(i)}x_e & \leq v^{(s(i),\alpha)} & & s(i) \subseteq \delta(i) \hbox{ such that }\label{eq:strenthen3} \\
                     &                        & & v^{(\{i,k\} \cup s(i),\alpha)} > v^{(s(i),\alpha)} \nonumber \\
                     &                        & & \hbox{for every } \{i,k\} \in \delta(i) \setminus s(i).  \nonumber
\end{align}

Given an initial subset of edges $s(i)$, inequalities \eqref{eq:strengthen2} can be strengthened to the form \eqref{eq:strenthen3} by a sequential lifting procedure. The procedure picks $\{i,k\} \not \in s(i)$ and checks whether or not $v^{(\{i,k\} \cup s(i),\alpha)} \leq v^{(s(i),\alpha)}$ holds. In case the latter condition applies, the set $s(i)$ is updated to $s(i) \cup \{i,k\}$, otherwise, $\{i,k\}$ is discarded and another edge in $\delta(i) \setminus s(i)$ is investigated. The procedure stops when $v^{(\{i,k\} \cup s(i),\alpha)} > v^{(s(i),\alpha)}$ for every edge $\{i,k\} \in \delta(i) \setminus s(i)$.

Define formulation $\mathcal{F}_x^+$ as the intersection of $\mathcal{F}_{\mathcal{T}}$ and the exponentially many inequalities \eqref{eq:strenthen3}. Clearly, $\mathcal{F}_{x}^+ \subseteq \mathcal{F}_x$. Our computational results show that, in practice $\mathcal{F}_{x}^+$ is much stronger than $\mathcal{F}_x$ for the hardest $\alpha-$MSTP instances, i.e., when $\alpha < \pi$. However, $\mathcal{F}_{x}^+$ cannot be stronger than $\mathcal{F}_{xy}^*$ (or, stronger than $\mathcal{F}_{xy}$). That applies because $\hbox{Proj}_x(\mathcal{F}_{xy}^*)$, the projection of $\mathcal{F}_{xy}^*$ onto the $\mathbf{x}$ space, is contained in the set $\mathcal{F}_x^+$. The following result, whose proof is provided in the \ref{sec:apptheo}, summarizes these observations.

\begin{theorem}
\label{theoprojection}
$\hbox{Proj}_x(\mathcal{F}_{xy}^*) \subseteq \mathcal{F}_x^+$ and thus $w(\mathcal{F}_x^+)\leq w(\mathcal{F}_{xy}^*)$ applies.
\end{theorem}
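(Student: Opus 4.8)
The plan is to establish the set-containment $\hbox{Proj}_x(\mathcal{F}_{xy}^*) \subseteq \mathcal{F}_x^+$ directly, and then read off the bound comparison. I would take an arbitrary $(\overline{\mathbf{x}},\overline{\mathbf{y}}) \in \mathcal{F}_{xy}^*$ and verify that its $\mathbf{x}$-component satisfies every defining inequality of $\mathcal{F}_x^+$. Since $\mathcal{F}_{xy}^*$ already contains the spanning-tree polytope $\mathcal{F}_{\mathcal{T}}$, only the LACs \eqref{eq:strenthen3} remain to be checked; and because each \eqref{eq:strenthen3} is just \eqref{eq:strengthen1} written for a particular (maximal) set $s(i)$, it suffices to prove $\sum_{e\in s(i)}\overline{x}_e \le v^{(s(i),\alpha)}$ for every $i\in V$ and every $s(i)\subseteq \delta(i)$. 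The inequality $w(\mathcal{F}_x^+) \le w(\mathcal{F}_{xy}^*)$ then follows, since the objective depends only on $\mathbf{x}$ and a larger feasible region yields a smaller minimum.

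The workhorse is the coupling family \eqref{eq:allowed1}. First I would sum $\overline{x}_{ij} \le \sum_{k\in L_{ij}}\overline{y}_{ik}$ over all $\{i,j\}\in s(i)$ and interchange the order of summation on the right-hand side. The coefficient accumulated by each $\overline{y}_{ik}$ is the number of edges $\{i,j\}\in s(i)$ with $k\in L_{ij}$; since $k\in L_{ij}$ is equivalent to $\measuredangle_{kij}\le\alpha$, that coefficient is exactly $v_{ik}^{(s(i),\alpha)}$, the number of edges of $s(i)$ enclosed by the $\alpha$-sector that starts at $\vec{ik}$. This produces $\sum_{e\in s(i)}\overline{x}_e \le \sum_{\vec{ik}\in R_i} v_{ik}^{(s(i),\alpha)}\,\overline{y}_{ik}$.

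The heart of the argument, and the step I expect to be the main obstacle, is the geometric claim that $v_{ik}^{(s(i),\alpha)} \le v^{(s(i),\alpha)}$ holds for every direction $\vec{ik}\in R_i$, including those with $\{i,k\}\notin s(i)$. The bound $v^{(s(i),\alpha)}$ is a maximum taken only over rays aligned with edges of $s(i)$, whereas $\vec{ik}$ is an arbitrary ray, so this is not immediate. I would prove it by a shifting argument: let $\{i,j^*\}$ be the first edge of $s(i)$ met while rotating anti-clockwise from $\vec{ik}$, and observe that advancing the starting ray from $\vec{ik}$ to $\vec{ij^*}$ loses no already-enclosed edge of $s(i)$, because every such edge lies at angular distance at most $\alpha$ beyond $\vec{ij^*}$ (it was within $\alpha$ of $\vec{ik}$, and $\vec{ij^*}$ is the closest enclosed edge). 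Hence $v_{ij^*}^{(s(i),\alpha)} \ge v_{ik}^{(s(i),\alpha)}$, and since $\{i,j^*\}\in s(i)$ the definition of $v^{(s(i),\alpha)}$ gives the claim. The delicate point is the modular arithmetic of the angles $\measuredangle$ when verifying that all enclosed edges indeed fall in $[\measuredangle_{0ij^*},\measuredangle_{0ij^*}+\alpha]$.

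With this claim in hand the proof closes quickly: using $\overline{\mathbf{y}}\ge\mathbf{0}$ and the assignment constraint $\sum_{\vec{ik}\in R_i}\overline{y}_{ik}=1$ from \eqref{eq:assigny}, the weighted sum is a convex combination bounded above by its largest coefficient, so $\sum_{\vec{ik}\in R_i} v_{ik}^{(s(i),\alpha)}\,\overline{y}_{ik} \le v^{(s(i),\alpha)}$, as required. I note that this route never invokes the dropped coupling inequalities \eqref{eq:couplexy1}, so the very same argument would simultaneously show that the projection of $\mathcal{F}_{xy}$ is contained in $\mathcal{F}_x^+$.
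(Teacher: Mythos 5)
Your proposal is correct, and it is in essence the primal reading of the paper's dual argument, carried out with the same certificate but lighter machinery. The paper (in \ref{sec:apptheo}) projects $\mathbf{y}$ out via Farkas multipliers: it describes $\hbox{Proj}_x(\mathcal{F}_{xy}^*)$ by the projection cuts \eqref{eq:projcuts01} over multipliers satisfying \eqref{proj1}--\eqref{proj2}, sets up the separation program $(\hbox{{\bf SEP}}_i)$ with normalization \eqref{eq:norm2}, and then exhibits the specific multipliers $\hat{\beta}_{ij}^i = 1/v^{(s(i),\alpha)}$ on $s(i)$ (zero elsewhere) and $\hat{\tau}_i = 1$, whose feasibility reduces to showing $v_{ij}^{(s(i),\alpha)} \leq v^{(s(i),\alpha)}$ for \emph{every} ray $\vec{ij} \in R_i$; scaling the resulting cut by $v^{(s(i),\alpha)}$ recovers \eqref{eq:strenthen3}. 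Your aggregation --- summing \eqref{eq:allowed1} over $s(i)$, interchanging sums so that $\overline{y}_{ik}$ collects the coefficient $v_{ik}^{(s(i),\alpha)}$, and bounding via \eqref{eq:assigny} and $\overline{\mathbf{y}} \geq \mathbf{0}$ --- applies exactly these multipliers directly to the constraints, so the two proofs rest on the identical geometric fact: your ``shifting argument'' is precisely the paper's Lemma~\ref{lemnovo}, whose proof also rotates the ray forward to $\vec{iz} = \arg\min \{ \measuredangle_{uiv_k}: \vec{iv_k} \in Z_i\}$ (your $\vec{ij^*}$) and argues that no covered edge of $s(i)$ is lost, handling collinearity as a separate easy case. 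What your route buys is economy and transparency: no projection-cone description, no separation LP, and it makes explicit (as you observe) that \eqref{eq:couplexy1} is never used, a fact the paper instead deduces from $\mathcal{F}_{xy} \subseteq \mathcal{F}_{xy}^*$. What the paper's framing buys is an explicit outer description of $\hbox{Proj}_x(\mathcal{F}_{xy}^*)$ by projection cuts and the attendant separation problem, which situates the LACs \eqref{eq:strenthen3} within the full family of projection cuts --- of independent interest beyond the theorem itself. One small point to patch in your shifting step: when $v_{ik}^{(s(i),\alpha)} = 0$ there is no ``closest enclosed edge'' $\vec{ij^*}$, but the claim is then trivial since $v^{(s(i),\alpha)} \geq 1$ for nonempty $s(i)$ (every edge covers itself, as $\measuredangle_{jij} = 0$); the counting in Lemma~\ref{lemnovo} absorbs this case automatically.
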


\subsubsection{Valid $\alpha-$MSTP inequalities from the Stable Set Polytope}
\label{sec:stable}

We now discuss valid inequalities for $\alpha-$MSTP obtained from the Stable Set Polytope \cite{padberg:1973}. To that aim, consider the Independent Set $$\mathcal{I} := \{ \mathbf{x} \in \mathbb{B}^m:  \mathbf{x} \hbox{ satisfies LACs } \eqref{eq:strenthen3} \}.$$  A superset of $\mathcal{I}$ is used here to reinforce $\alpha-$MSTP LPR relaxation bounds. More specifically, we consider the subset of inequalities \eqref{eq:strenthen3}, for which $v^{(s(i),\alpha)} = 1$ applies, i.e.,  we consider valid inequalities for the set 
\begin{align*}
\mathcal{I}_1  & := \left \{ \mathbf{x} \in \mathbb{B}^m: \mathbf{x} \hbox{ satisfies LACs }  \eqref{eq:strenthen3}, \right. \\
                      & \left.   \hspace{5mm} \hbox{ defined for } s(i) \subseteq \delta(i): \: v^{(s(i),\alpha)} = 1, \hbox{ for all } i \in V \right \}.
\end{align*}

Note that if $\alpha \geq \pi$, no pair of adjacent edges of $E$, say $\{i,j\}$ and $\{i,u\}$, are non-admissible. Quite clearly, either $\measuredangle_{jiu} \leq \pi$ or else 
$\measuredangle_{uij} \leq \pi$ holds in that case. Thus, $\mathcal{I}_1 = \{0,1\}^m$  when $\alpha \geq \pi$. Hence, the set of valid inequalities derived next are of help for improving $\alpha-$MSTP LPR relaxation bounds only for the $\alpha < \pi$ case, the hardest ones, as demonstrated in \cite{cunha:2019}. For the $\alpha \geq \pi$ case, valid inequalities for the more general Independent Set $\mathcal{I}$ could be useful. These, however, are not investigated in this study.

In order to characterize additional valid inequalities for $\alpha-$MSTP, consider the conflict graph \cite{atamturk:2000} $G_c = (V_c,E_c)$ associated to $\mathcal{I}_1$. Each edge of $E$ gives rise to a vertex in $V_c$. For any given $i \in V$, a pair $e = \{i,j\}, f= \{i,u\}$ of distinct edges adjacent to $i$ gives rise to an edge  $\{e,f\} \in E_c$  if the pair is non-admissible, i.e., if $\measuredangle_{jiu} >\alpha$ and $\measuredangle_{uij} >\alpha$. Thus $x_e + x_f \leq 1$ is valid for $\alpha-$MSTP. Such an inequality is a particular case, possibly a weakened version, of inequalities that define $\mathcal{I}_1$.

Among the various known classes of valid inequalities for the Stable Set polytope, we consider odd-cycles and odd-holes. Let $C \subseteq E_c$ denote an odd-cycle in $G_c$, i.e., a simple cycle of $G_c$  for which the number of vertices visited in the cycle (or edges traversed in the cycle), $|C|$, is odd.  Cycle inequalities
\begin{equation}
\label{eq:cycleineq}
\sum_{e \in C}x_e \leq \lfloor \frac{|C|}{2} \rfloor
\end{equation}
\noindent are valid for $\mathcal{I}_1$ and for $\alpha-$MSTP. A chordless cycle of $G_c$ is called hole of $G_c$. Nemhauser and Trotter \cite{nemhauser:1974} have shown that when $C$ is an odd-hole, inequalities \eqref{eq:cycleineq} are facet defining for the convex hull of points in the associated Stable Set polytope. Although the result does not directly extend to the $\alpha-$MSTP case, odd-holes are preferable to odd-cycles, since the former are stronger than the latter (see the discussion on that matter in Section 3.1 of \cite{rebennack:2012}). Details on the separation of odd-cycles (holes) will be provided later on in the paper.

Several other families of valid inequalities for the Stable Set Problem were characterized in the literature, for instance: clique inequalities, web and wheel inequalities (see \cite{rebennack:2012} for a survey). Nevertheless, these additional inequalities were not used here, since either their separation time was too high to be used, or else, they were not effective in strengthening $\alpha-$MSTP LPR bounds. For instance, consider the case of clique inequalities. During a pre-processing step, all maximal cliques of $G_c$ were identified with the algorithm in \cite{segundo:2018} and stored in a list, to be scanned within the BC search tree. For all instances in our test bed, no clique inequalities from the list were violated, at the root node of our BC method. Thus, their separation was not included in our final implementation.

From now on, denote by $\mathcal{F}_{x}^{++}$ the intersection of $\mathcal{F}_{x}^{+}$ with cycle inequalities \eqref{eq:cycleineq}.
\section{Improved Branch-and-cut algorithms}
\label{sec:bcs}

In this section, we provide the main implementation details of two BC algorithms, {\tt BCFXY}$^*$ and {\tt BCFX}$^{++}$, respectively based on formulations $\mathcal{F}_{xy}^*$ and $\mathcal{F}_{x}^{++}$.

Algorithm {\tt BCFXY}$^*$ works precisely as  {\tt BCFXY} \cite{cunha:2019}, which is based on the similar formulation  $\mathcal{F}_{xy}$. Both algorithms implement the same cutting plane engine, that separates only one class of valid inequalities, SECs \eqref{eq:sec}. 

There are two minor differences between the two methods, {\tt BCFXY} and {\tt BCFXY}$^*$. First, the latter does not include inequalities \eqref{eq:couplexy1} in the LPRs. Since variables $\mathbf{y}$ are not enforced to be integer for the {\tt BCFXY}$^*$ case,  the strong branching approach implemented by the MIP solver does not investigate their impact on branching, possibly saving some CPU time. Besides that, another possible advantage of not branching on $\mathbf{y}$ has to do with the balancedness of the branch-and-bound search tree. The search tree tends to be more balanced when branching is carried out on $\mathbf{x}$ first. Our reasoning is the following. Suppose {\tt BCFXY} branches on $y_{ij}$, creating two nodes: ``$y_{ij} = 1$" and ``$y_{ij} = 0$". The branch-and-bound node corresponding to ``$y_{ij} = 1$" is more restricted than the node ``$x_{ij} = 1$" that would be obtained if the algorithm had branched on $x_{ij}$. That applies because $y_{ij} = 1 \rightarrow x_{ij} = 1$ and $x_{ik} = 0, \vec{ik} \in R_i: \measuredangle_{0ik}^j > \measuredangle_{0ij} + \alpha$. However, the other branch-and-bound node, corresponding to ``$y_{ij} = 0$" is less restricted than the corresponding ``$x_{ij} = 0$" case, since $y_{ij} = 0$ does not necessarily imply $x_{ij} = 0$. Thus, a less balanced search tree likely results if the algorithm systematically (and unnecessarily) branches first on $\mathbf{y}$. 

Due to the similarities between {\tt BCFXY}$^*$ and {\tt BCFXY},  this section concentrates more on the description of algorithm {\tt BCFX}$^{++}$, which brings important new ingredients to  {\tt BCFX}. In addition to SECs \eqref{eq:sec}, the cutting plane engine in {\tt BCFX}$^{++}$ separates two other classes of valid inequalities: LACs \eqref{eq:strenthen3} and odd-cycles \eqref{eq:cycleineq}. 

{\tt BCFX}$^{++}$ starts off solving the following relaxation for $\mathcal{F}_x^{++}$

\begin{equation}
\label{lpfx}
\min \left \{ \sum_{e \in E} w_e x_e: \mathbf{x} \in \overline{{\cal F}}_{x}\right \},
\end{equation}
\noindent where polyhedral region $\overline{{\cal F}}_{x} \supset {\cal F}_{x}^{++}$ denotes the intersection of constraints \eqref{eq:cardtree} and  \eqref{eq:xbound} and $\mathbf{x} \leq \mathbf{1}_m$, where $\mathbf{1}_m$ denotes the $m-$dimensional vector of ones. SECs \eqref{eq:sec}, LACs  \eqref{eq:strenthen3} and odd-cycles \eqref{eq:cycleineq} are not included in the initial relaxation $\overline{{\cal F}}_{x}$ and added to the relaxation on the fly. In what follows, denote by $\overline{\mathbf{x}} \in [0,1]^m$ an optimal solution to \eqref{lpfx}. 

The cutting plane algorithm embedded in {\tt BCFX}$^{++}$  separates SECs,  LACs  \eqref{eq:strenthen3} and odd-cycles \eqref{eq:cycleineq}, according to this order.  One class of valid inequalities is only separated if $\overline{\mathbf{x}}$ violates no inequality of the preceding classes. Violated inequalities are appended to $\overline{{\cal F}}_x$, thus resulting in an updated reinforced relaxation $\overline{{\cal F}}_x$ to ${\cal F}_x^{++}$, and the algorithm then iterates. Better computational results were found when the separation of odd cycles was restricted to branch-and-bound nodes with depth equal or smaller than three (the root node having depth equal to one). The other two families of valid inequalities are always separated for all branch-and-bound nodes.

{\tt BCFX}$^{++}$ separates SECs precisely as {\tt BCFX} does, combining the separation heuristics of \citet{bicalho:2016} and the exact algorithm introduced by \citet{padberg:1983}. In fact, the same SEC separation procedures are shared by algorithms {\tt BCFXY}, {\tt BCFXY}$^*$,  {\tt BCFX} and {\tt BCFX}$^{++}$. An important feature of the SEC separation strategy is that the exact separation is only called if the heuristic fails on finding a SEC violated by $\overline{\mathbf{x}}$. Details on how the two procedures work can be found in \cite{padberg:1983,bicalho:2016,cunha:2019}.

LACs \eqref{eq:strenthen3} are separated by the following exact algorithm, called for every $i \in V$. Define $\overline{\delta}(i) := \{ e \in \delta(i): \overline{x}_e > 0\}$. The algorithm enumerates all possible subsets of edges in $\overline{\delta}(i)$. For each subset $s(i) \subseteq \overline{\delta}(i)$, it computes the amount $v^{(s(i),\alpha)}$ and checks whether $\sum_{e \in s(i)}\overline{x}_e > v^{(s(i),\alpha)}$ holds. In positive case, a violated inequality \eqref{eq:strengthen1} is found. The algorithm then implements the sequential lifting procedure discussed in Section \ref{sec:lifting}, so that an inequality \eqref{eq:strengthen1} is lifted to the stronger form \eqref{eq:strenthen3}. All violated inequalities are stored in a list; one list dedicated to each $i \in V$. Only the most violated inequality in each list is added to the LPR relaxation $\overline{{\cal F}}_x$. Therefore, at most $n$ violated inequalities are appended to the new relaxation, whenever the enumeration is called.  Although the algorithm outlined above runs in exponential time, it is very fast in practice, because $\overline{\delta}(i)$ includes just few edges.

The procedure for separating odd-cycle inequalities we implemented is also exact, in the sense that it always finds a violated inequality by $\overline{\mathbf{x}}$, provided that one exists. The algorithm is described in detail in \cite[Section 4.1]{rebennack:2012}. In short, it receives $\overline{\mathbf{x}}$ and $G_c$ as input and returns the minimum weight odd cycle of $G_c$. In addition, it checks if that cycle is chordless. If it is not, it identifies the odd holes in that cycle and returns them. The algorithm creates an auxiliary weighted bipartite graph $\hat{G} = (\hat{V}_c,\hat{E}_c)$ from $G_c$ as follows. The vertex set $\hat{V}_c$ involves two copies $e^+,e^-$ of each vertex $e \in V_c$. Vertices labelled `+' define one set of the partition; the other is defined by vertices labelled `-'. The edge set is defined as $\hat{E}_c = \{  \{f^+,e^-\}, \{e^+,f^-\} : \{e,f\} \in E_c\}$.  The weight of an edge $\{e^+,f^-\} \in \hat{E}_c$ is  $\frac{1-\overline{x}_e - \overline{x}_f}{2}$, where $f$ and $e$ respectively represent the edges of $E$ corresponding to the vertices of $G_c$. Then, for every $e \in V_c$, the algorithm computes the shortest path of $\hat{G}_c$ that connects $e^+$ to $e^-$. From that path in $\hat{G}_c$, the algorithm extracts an odd cycle of $G_c$ and checks whether or not the cycle is chordless. In case it is not, an odd hole is retrieved from the odd cycle.

Since the odd cycle separation algorithm is only called when no LACs are violated, all constraints of the type $x_e + x_f \leq 1$ are always satisfied by $\overline{\mathbf{x}}$, for every edge $\{e,f\}$ of the conflict graph. Therefore, the weights $\frac{1-\overline{x}_e - \overline{x}_f}{2}$ are non-negative and the shortest path computations can be carried out by Dijkstra's algorithm. The algorithm's complexity is dominated by the shortest path computations. Thus, using Dijskstra's algorithm, it runs in $O(|V_c||E_c|\log(|V_c|))$ time.  In order to improve its practical performance, the shortest path from $e^+$ to $e^-$ is only called if, at that separation round, the edge $e$ of $E$ was not already included in a violated cycle inequality. Not only such an strategy reduces the number of calls of Dijkstra's algorithm, but also tends to generate  {\it sufficiently orthogonal} violated inequalities. Following such an strategy, we managed to include all violated inequalities in the new relaxation $\overline{{\cal F}}_{x}$, without excessive impact on linear programming reoptimization cost.

Before the very first relaxation \eqref{lpfx} is solved, the Kruskal-like heuristic introduced in \cite{cunha:2019} is called, to provide valid $\alpha-$MSTP upper bounds for 
 {\tt BCFX}$^{++}$. The heuristic is called again, at the end of each branch-and-bound node. However, instead of using the original edge weights $\{w_e: e \in E\}$ as input, the heuristic is called under weights modified by the optimal solution $\overline{\mathbf{x}}$ to the last LPR solved at that node. More precisely, the heuristic is called under modified costs $\{w_e(1 - \overline{x}_e): e \in E\}$. The same strategy is used for all the other $\alpha-$MSTP BC algorithms discussed in this paper. 
 
Finally, {\tt BCFX}$^{++}$ is implemented under the {\tt XPRESS}  MIP package, release 8.4 \cite{xpress-2017}. {\tt XPRESS} is thus responsible for solving LPRs, \eqref{lpfx}, and managing the BC tree. It uses default options to choose a variable to branch on and implements a best-first search strategy. Additional features offered by {\tt XPRESS}, such as automatic cut generation and primal heuristics, are kept switched off. Likewise, multi-threading is not used as well.

\section{Computational experiments}
\label{sec:results}

In this section, we numerically evaluate the strength of the bounds $w({\cal F}_x^{+})$ and $w({\cal F}_x^{++})$ and compare them to $w({\cal F}_x)$ and $w({\cal F}_{xy}^*$). In addition, we report on the computational experiments with the $\alpha-$MSTP BC algorithms {\tt BCFX}$^{++}$ and {\tt BCFXY}$^{*}$.

The algorithms described here were implemented in {\tt C} and compiled with {\tt gcc}, with optimization flags {\tt -O3} turned on, under the Linux  operating system (release LTS 14.04).
A computer equipped with an Intel XEON E5645 processor, running at 2.4GHz and having 32Gb of RAM memory (12Mb of cache memory), was used for the experiments. Our computational results were generated with the same computational environment used in \cite{cunha:2019}. Additionally, the same MIP solver (and release) was used here and in \cite{cunha:2019}, in order to manage the BC search trees. Therefore, our results and those in \cite{cunha:2019} are directly comparable. 

\subsection{Test instances}

The computational experiments reported here were conducted with the $\alpha-$ MSTP instances suggested in \cite{cunha:2019}. They were generated from two dimensional Euclidean graphs, corresponding to Euclidean Traveling Salesman Problem (ETSP) instances of the TSPLIB \cite{tsplib:1991}. Out of the underlying set of points of the ETSP instance, three distinct graphs $G = (V,E)$ were generated in \cite{cunha:2019}. One of them involves all vertices of the TSPLIB instance, while the two others involve a lesser number of vertices. For example, take TSPLIB instance {\tt berlin52} that has 52 vertices.  Graphs with $n \in \{ 15, 30, 52\}$ vertices were then generated out of the original set of 52 Euclidean plane points. The largest of them, the TSPLIB graph itself. The other two, involving respectively the first 15 and the first 30 points. Each edge set $E$ was always complete, irrespective of the value of $n$. In addition,, for every applicable pair of vertices, their corresponding Euclidean distances, $\{w_e: e=\{i,j\} \in E\}$, were taken as edge weights. Full {\tt double} precision was used for computing these distances. In total, 39 distinct graphs were thus generated. 

Our numerical investigation is dedicated to values of $\alpha$ in the interval $\alpha < \pi$. Reasons for not testing instances with $\alpha \geq \pi$ are twofold. First, the numerical results reported in \cite{cunha:2019} showed that they are considerably easier then their $\alpha < \pi$ counterparts. In addition, the set partitioning structure characterized here, ${\cal I}_1$, is useless for strengthening LPRs for them.  

For each of the 39 graphs generated in \cite{cunha:2019}, we considered 6 values of $\alpha \in [\frac{\pi}{3},\pi)$. In addition to the values already tested in \cite{cunha:2019}, $\alpha \in \{ \frac{\pi}{3},\frac{2\pi}{3}\}$, we considered four new values, namely $\alpha \in \{\frac{2\pi}{5},\frac{\pi}{2}, \frac{3\pi}{5},\frac{4\pi}{5}\}$. As in \cite{cunha:2019}, smaller values of $\alpha$  were not tested here since \citet{aschner:2017} showed that $\alpha-$STs are only guaranteed to exist when $\alpha \geq \frac{\pi}{3}$ applies.

In total, 234 instances, corresponding to 39 graphs for each of the 6 values of $\alpha$, were tested. Due to the large number of test instances, the main text body of the paper presents only aggregated results, which indicate more general trends. Detailed computational results, for each value of $\alpha$ and input graph, are presented in an accompanying supplementary material.

\subsection{Comparison of LPR bounds}
\label{sec:reslprs}

In this section, we numerically evaluate the impact of the $\alpha-$MSTP valid inequalities \eqref{eq:strenthen3} and \eqref{eq:cycleineq}, for strengthening the existing LPR relaxation bounds for the problem. We also take into consideration the computational effort needed to evaluate these bounds.

The first important observation to be made is that, according to Theorem \ref{theoprojection}, $w({\cal F}_{x}^+) \leq w({\cal F}_{xy}^*)$ holds. In practice, for all 234 instances in our test bed, bounds $w({\cal F}_{x}^+)$ matched $w({\cal F}_{xy}^*)$ counterparts. However, we did not manage to prove or to disprove that $w({\cal F}_{x}^+)$ and $w({\cal F}_{xy}^*)$ are always equally strong. 

We now discuss how strong bounds $w({\cal F}_x^{++})$ are compared to $w({\cal F}_x)$ and $w({\cal F}_{xy}^*)$. In Table \ref{tab:lpsavg}, we report the average gaps $\frac{w({\cal F}_x^{++})-w({\cal F}_x)}{w({\cal F}_x)}$ and $\frac{w({\cal F}_x^{++})-w({\cal F}_{xy}^*)}{w({\cal F}_{xy}^*)}$, in percentage values. For each value of $\alpha$, the table reports values averaged over the 39 graphs. The table also presents $\frac{t({\cal F}_x^{++})}{t({\cal F}_x)}$, the ratio between the average CPU times needed to compute bounds $w({\cal F}_x^{++})$ and $w({\cal F}_x)$, as well as $\frac{t({\cal F}_x^{++})}{t({\cal F}_{xy}^*)}$, similarly defined. 

\begin{table}[!b]
\begin{center}
{\footnotesize
\caption{Summary of LPR bounds and LPR CPU time ratios.}
\label{tab:lpsavg}       
\renewcommand{\arraystretch}{1.4} 
\renewcommand{\tabcolsep}{0.8mm}  
\begin{tabular}{crrrrrr}
\hline
$\alpha$	&	& \multicolumn{2}{c}{LPR bounds} & & \multicolumn{2}{c}{Ratios of CPU times to} \\
                &	& \multicolumn{2}{c}{quality (\%)} & & \multicolumn{2}{c}{compute LPR bounds} \\
\cline{3-4} \cline{6-7}
                 &     & $\frac{w({\cal F}_x^{++})-w({\cal F}_x)}{w({\cal F}_x)}$ & $\frac{w({\cal F}_x^{++})-w({\cal F}_{xy}^*)}{w({\cal F}_{xy}^*)}$ & & $\frac{t({\cal F}_x^{++})}{t({\cal F}_x)}$ & $\frac{t({\cal F}_x^{++})}{t({\cal F}_{xy}^*)}$ \\
$\frac{\pi}{3}$ & & 13.89 & 1.60 & & 59.63 & 1.50 \\	
$\frac{2\pi}{5}$ & & 9.39 & 2.02 & & 27.78 & 0.87 \\	
$\frac{\pi}{2}$ & & 4.73 & 1.85 & & 13.89 & 0.51 \\	
$\frac{3\pi}{5}$ & & 1.75 & 1.23 & & 8.62& 0.37 \\
$\frac{2\pi}{3}$ & & 0.82 & 0.76 & & 6.34 & 0.26 \\
$\frac{4\pi}{5}$ & & 0.13 & 0.13 & & 4.67 & 0.16 \\		
\hline
\end{tabular}
}
\end{center}
\end{table}

Computational results reported in Table \ref{tab:lpsavg} show that, for small values of $\alpha$, formulation ${\cal F}_{x}$ is substantially weaker than ${\cal F}_{x}^{++}$. However, bounds $w({\cal F}_{x})$ are much cheaper to be evaluated than  $w({\cal F}_{x}^{++})$. They also suggest that formulation ${\cal F}_{xy}^*$ is, on average, around 2\% weaker than ${\cal F}_{x}^{++}$, for the three smallest values of $\alpha$. Since  bounds $w({\cal F}_{x}^+)$ and $w({\cal F}_{xy}^*)$ are identical for all instances in our test bed, the fact that bounds $w({\cal F}_{x}^{++})$ exceed the best LPR relaxation bounds introduced in \cite{cunha:2019}, $w({\cal F}_{xy})$, comes exclusively from the use of cycle inequalities \eqref{eq:cycleineq}. Notice that, except for the $\alpha = \frac{\pi}{3}$ case, not only the bounds $w({\cal F}_{xy}^*)$ are weaker than $w({\cal F}_{x}^{++})$ but the CPU times needed to evaluate them are also larger than those needed to evaluate $w({\cal F}_{x}^{++})$. One of the reasons for that, already highlighted by \citet{cunha:2019}, is that inequalities \eqref{eq:allowed1} become dense as $\alpha$ grows. The fact that LPR bounds $w({\cal F}_{xy}^*)$ are expensive to be evaluated, compared to $w({\cal F}_{x}^{+})$ and $w({\cal F}_{x}^{++})$, refrained us from separating inequalities \eqref{eq:cycleineq}, within {\tt BCFXY}$^{*}$.


\subsection{Computational results for the BC algorithms}
\label{sec:resbcs}

In this section, we compare four BC algorithms: {\tt BCFX} and {\tt BCFXY}, from the literature \cite{cunha:2019}, {\tt BCFX}$^{++}$ and {\tt BCFXY}$^*$, introduced here. Each algorithm was allowed to run for a time limit of 2 CPU hours, for each value of $\alpha$ and input graph.

Table \ref{tab:optcert} reports, for each value of $\alpha$,   the number (out of 39) of optimality certificates  obtained by each algorithm. According to these results,  {\tt BCFX}$^{++}$ is capable of solving more instances to proven optimality than its competitors, within the imposed time limit, for the entire spectrum of $\alpha$ values tested here. For the smallest values of $\alpha$, {\tt BCFX} is not competitive with the best algorithm in \cite{cunha:2019}, {\tt BCFXY}, in terms of the number of optimality certificates. However, due to the use of valid inequalities \eqref{eq:strenthen3} and \eqref{eq:cycleineq}, algorithm {\tt BCFX}$^{++}$, the enhanced version of {\tt BCFX} that also relies on a formulation defined exclusively on the natural space of variables $\mathbf{x}$, outperformed {\tt BCFXY} (and {\tt BCFXY}$^*$) in that respect.

We complement the evaluation of the impact of inequalities \eqref{eq:strenthen3} and \eqref{eq:cycleineq} by presenting, in Table \ref{tab:fxs}, a direct comparison between {\tt BCFX}$^{++}$ and {\tt BCFX}. The columns of the table are split in two blocks. The first one is dedicated to those instances both algorithms managed to solve, within the imposed time limit. The following information are provided, for that block: the number of instances solved to proven optimality by {\tt BCFX} and {\tt BCFX}$^{++}$, the average CPU time (in seconds) taken by each algorithm to solve these instances, and the number of times each algorithm was the fastest of the two. The second block of columns addresses those instances no algorithm managed to solve within the time limit. The table presents the number of instances that could not be solved by both methods, followed by the number of times each algorithm delivered the strongest best upper bounds (BUB), when the time limit was hit. Results given in Table \ref{tab:fxs} show that, on the average, {\tt BCFX}$^{++}$ was faster than {\tt BCFX} for the entire range of $\alpha$ values. Considering the hardest cases, i.e., those defined for $\alpha \in \{\frac{\pi}{3}, \frac{2\pi}{5},\frac{\pi}{2}\}$, {\tt BCFX}$^{++}$ is at least 2 orders of magnitude faster than {\tt BCFX}, on the average.  For the largest values of $\alpha$ tested in our study, {\tt BCFX}$^{++}$ was faster than {\tt BCFX} in fewer cases.  {\tt BCFX}$^{++}$ clearly outperforms {\tt BCFX} for those instances that were not solved by both. Often, root node lower bounds $w({\cal F}_{x}^{++})$ computed by {\tt BCFX}$^{++}$ are  stronger than the best globally valid lower bounds computed by {\tt BCFX}, after investigating thousands of nodes at the end of the time limit. Furthermore, {\tt BCFX}$^{++}$ provided the $\alpha-$STs with the lowest costs, when the time limit was hit. 

As a general observation, the impact of inequalities \eqref{eq:strenthen3} and \eqref{eq:cycleineq} decreases as $\alpha$ gets close to $\pi$. In fact, the lifting \eqref{eq:strenthen3} of inequalities \eqref{cover} becomes less important, since for more non-admissible sets $s(i)$ the value of $v^{(s(i),\alpha)}$ do not change from $|s(i)|-1$. In addition, the density of the conflict graphs $G_c$ becomes smaller, since fewer pairs of edges are non-admissible. Thus, fewer valid inequalities \eqref{eq:cycleineq} are expected to be characterized.

A direct comparison between {\tt BCFXY}$^*$ and {\tt BCFXY} is presented in Table \ref{tab:fxys}. In addition to the type of information provided earlier in Table \ref{tab:fxs}, Table \ref{tab:fxys} also provides the average number of branch-and-bound nodes investigated for the instances solved by both algorithms. Considering those instances, {\tt BCFXY}$^*$ is, on the average, faster than {\tt BCFXY} for all values of $\alpha$, except for $\alpha = \frac{4}{5}\pi$, the easiest value of $\alpha$ considered here. In many more instances, {\tt BCFXY}$^*$ attained the smallest CPU times. Except for the two extreme values of $\alpha$ tested here, $\alpha = \frac{\pi}{3}$ and $\alpha = \frac{4\pi}{5}$, fewer nodes are explored by {\tt BCFXY}$^*$ on the average, for those instances solved by both methods. In part, such results confirm our claim that, in general, more balanced (and possibly smaller) branch-and-bound search trees should result when one avoids branching on $\mathbf{y}$.
Moving our focus now to those instances left unsolved by both, the advantage of {\tt BCFXY}$^*$ over {\tt BCFXY}, is not so pronounced as far the quality of the best upper bound at the end of the imposed time limit is concerned. In general, {\tt BCFXY}$^*$ delivers better feasible solutions than {\tt BCFXY} at the end of the time limit, for the largest values of $\alpha$, while the opposite holds for the two smallest values of $\alpha$. 

The results discussed above suggest that algorithm {\tt BCFX} is not on pair with the other three algorithms compared here, specially for the hardest instances. Thus, we restrict the comparison presented at Table \ref{tab:bcs} to the other three methods.  The table presents information following previously explained pattern of data. However, its second block also gives the number of times each of the three algorithms delivered the strongest globally valid lower bounds (BLB). 

Our discussion of the results given in Table \ref{tab:bcs} is divided in two parts: one for $\alpha = \frac{\pi}{3}$ and another for $\alpha \geq \frac{2\pi}{5}$. For the latter,  {\tt BCFX}$^{++}$ provided the best results. Considering the instances solved by the three, on average, it was never the slowest. On the contrary, except for the $\alpha = \frac{\pi}{2}$ case, it was the fastest on the average. Furthermore, in more cases for all values of $\alpha$ it was the fastest of the three. Now focusing on the instances left unsolved, in more cases {\tt BCFX}$^{++}$ delivered not only the strongest lower bounds at the end of the search (as one would expect since it is based on the strongest formulation), but it also provided the sharpest upper bounds. 

    We now compare the three methods for the $\alpha = \frac{\pi}{3}$ instances. {\tt BCFXY} obtained the smallest number of optimality certificates out of the 39 available instances. {\tt BCFX}$^{++}$ solved all the 14 instances solved by {\tt BCFXY} plus two others.  
In fact, {\tt BCFX}$^{++}$ stands out with the largest success rates, measured by the number of optimality certificates. Despite the fact that {\tt BCFXY} has the best average CPU times, it is hardly the fastest of the three, since in just one case it provided the smallest CPU times for the 14 instances solved by the three methods. While {\tt BCFX}$^{++}$ was the fastest in 4 out of these 14 instances, the best rate was attained by {\tt BCFXY}$^{*}$, the fastest in 9 cases.
As for the other values of $\alpha$ tested here, {\tt BCFX}$^{++}$ also provides the best lower bounds at the end of the time limit, for the instances no algorithm could solve. Overall, the dominance of {\tt BCFX}$^{++}$ over its competitors is not as evident as for the $\alpha \geq \frac{2\pi}{5}$ instances. Note that, for the 14 instances solved by the three methods, {\tt BCFX}$^{++}$ was the slowest, on the average. Its poor average computational results is mostly explained by the fact that it took 6245.8 seconds to solve instance {\tt pr76} with $n = 50$, while {\tt BCFXY} and {\tt BCFXY}$^{*}$ respectively took only 563.3 and 422.9 seconds to accomplish similar task. Finally, {\tt BCFXY} obtained the best upper bounds in more cases than {\tt BCFXY}$^*$, when the time limit was achieved without obtaining an optimality certificate. To summarize, for $\alpha = \frac{\pi}{3}$ it does not seem to exist a clear winner among the three methods. None of the three methods seems to dominate the others, but slightly inferior results seem to be obtained by {\tt BCFXY}.

Finally, detailed computational results reported in the online supplement to this paper show that {\tt BCFX}$^{++}$ provided 8 new optimality certificates for instances tested in \cite{cunha:2019}: 2 for  the $\alpha = \frac{\pi}{3}$ instances and 6 for the $\alpha = \frac{2\pi}{3}$ ones.

\begin{table}
\begin{center}
{\footnotesize
\caption{BC resuls: Number of optimality certificates.}
\label{tab:optcert}       
\renewcommand{\arraystretch}{1.5} 
\renewcommand{\tabcolsep}{0.8mm}  
\begin{tabular}{lrrrrr}
\hline
$\alpha$	    &		&	{\tt BCFX}	&	{\tt BCFXY}	& {\tt BCFXY}$^*$	&	{\tt BCFX}$^{++}$ \\
\cline{2-6}
$\frac{\pi}{3}$ &    & 6              &    14           &    15               &   16\\
$\frac{2\pi}{5}$ &    & 9             &    14           &    17               &   17 \\
$\frac{\pi}{2}$ &    &   15           &    17           &    17               &   18 \\
$\frac{3\pi}{5}$ &    &    21          &   22            &    23               &  24 \\
$\frac{2\pi}{3}$ &    &    30          &   26            &    26               &  30 \\
$\frac{4\pi}{5}$ &    &    39          &   39            &    39               &  39 \\
\hline
Total            &    &   120             &      132         &   137                &  144 \\
\hline
\end{tabular}
}
\end{center}
\end{table}


\begin{table}
\begin{center}
{\footnotesize
\caption{Direct comparison of {\tt BCFX} and {\tt BCFX}$^{++}$.}
\label{tab:fxs}       
\renewcommand{\arraystretch}{1.5} 
\renewcommand{\tabcolsep}{0.8mm}  
\begin{tabular}{lrcrrrrrrcrrr}
\hline
&		&	\multicolumn{6}{c}{Instances}	& &	\multicolumn{4}{c}{Instances left unsolved} \\
	    &		&	\multicolumn{6}{c}{Solved by both}	& &	\multicolumn{4}{c}{by both} \\
	    \cline{2-2}
\cline{3-8} \cline{10-13}
                &       &  \# of cases & \multicolumn{2}{c}{Avg CPU time} & & \multicolumn{2}{c}{\# of times} & & \# of cases & & \multicolumn{2}{c}{obtained better}\\ \cline{3-3} \cline{10-10}
 &       &   & \multicolumn{2}{c}{$t(s)$} & & \multicolumn{2}{c}{was faster} & & & & \multicolumn{2}{c}{BUBs} \\               
\cline{4-5} \cline{7-8}  \cline{12-13}
 $\alpha$               &       &              & {\tt BCFX} & {\tt BCFX}$^{++}$ & & {\tt BCFX} & {\tt BCFX}$^{++}$ & & & & {\tt BCFX} & {\tt BCFX}$^{++}$ \\
$\frac{\pi}{3}$  &    & 6  & 761.0 & 5.7 
& & 0 & 6  & & 23 & & 6 & 17  \\
$\frac{2\pi}{5}$ &    & 9  & 246.7 & 4.7  & & 0 & 9  & & 22 & & 3   & 19  \\            
$\frac{\pi}{2}$  &    &15  & 101.3 & 20.1  & & 2 & 13  & & 21 & & 3   & 18  \\       
$\frac{3\pi}{5}$ &    &21  & 326.9 & 121.8 & & 11 & 10 & & 15 & & 2   & 13    \\        
$\frac{2\pi}{3}$ &    &30  & 423.2 & 309.2 & & 25 & 5 & & 9  & & 2   & 7  \\        
$\frac{4\pi}{5}$ &    &39  & 178.9 & 124.5 & & 29 & 10 & & -  & & -   & -   \\           
\hline
\end{tabular}
}
\end{center}
\end{table}

\begin{table}
\begin{center}
\caption{Direct comparison of {\tt BCFXY} and {\tt BCFXY}$^{*}$.}
\label{tab:fxys}       
\renewcommand{\arraystretch}{1.5} 
\renewcommand{\tabcolsep}{0.8mm}  
\begin{tabular}{lrcrrrrrrrrrcrrr}
\hline
&		&	\multicolumn{9}{c}{Instances}	& &	\multicolumn{4}{c}{Instances left unsolved} \\
	    &		&	\multicolumn{9}{c}{Solved by both}	& &	\multicolumn{4}{c}{by both} \\
	    \cline{2-2}
\cline{3-11} \cline{13-16}
                &       &  \# of cases & \multicolumn{2}{c}{Avg CPU time} & & \multicolumn{2}{c}{\# of times} & & \multicolumn{2}{c}{Avg \# of} & & \# of cases & & \multicolumn{2}{c}{obtained better}\\ \cline{3-3} \cline{13-13}
 &       &   & \multicolumn{2}{c}{$t(s)$} & & \multicolumn{2}{c}{was faster} & & \multicolumn{2}{c}{of nodes} & & & & \multicolumn{2}{c}{BUBs} \\               
\cline{4-5} \cline{7-8} \cline{10-11} \cline{14-16}
 $\alpha$               &       &              & {\tt BCFXY} & {\tt BCFXY}$^{*}$ & & {\tt BCFXY} & {\tt BCFXY}$^{*}$ & & {\tt BCFXY} & {\tt BCFXY}$^{*}$ & & & & {\tt BCFXY} & {\tt BCFXY}$^{*}$ \\
$\frac{\pi}{3}$  &    & 14  & 241.6 &  274.4 &  & 2 & 12  & & 12969 & 14697 & &  23 &  &  12& 11  \\
$\frac{2\pi}{5}$ &    & 17  & 342.9 &  200.3 & & 0 & 17  & & 15900 & 14207 & & 21 & &   12 & 9  \\            
$\frac{\pi}{2}$  &    & 16 & 53.5 & 33.7       & &  1 &  15 & &  4312 & 3548    & & 21 & &    8 & 13  \\       
$\frac{3\pi}{5}$ &    & 22 & 681.2&  536.5  & &  3 & 19 & &  10374 & 9415 & & 16 & &    7 &  9   \\        
$\frac{2\pi}{3}$ &    & 25 & 305.9 & 180.2 & &  2&  23 & &  7809 & 3933   & & 12 & &    3 &   9 \\        
$\frac{4\pi}{5}$ &    & 39  &  264.7        &  280.1         & &  5&  34 & & 1285 & 2138 & & -  & & -   & -   \\           
\hline
\end{tabular}
\end{center}
\end{table}

\begin{table}
{\scriptsize
\caption{Direct comparison of {\tt BCFXY}, {\tt BCFXY}$^{*}$ and {\tt BCFX}$^{++}$.}
\label{tab:bcs}       
\renewcommand{\arraystretch}{1.4} 
\renewcommand{\tabcolsep}{0.5mm}  
\begin{tabular}{lrcrrrrrrccccccccccc}
\hline
&		&	\multicolumn{8}{c}{Instances solved}	& &	\multicolumn{9}{c}{Instances left unsolved} \\
	    &		&	\multicolumn{8}{c}{by the three}	& &	\multicolumn{9}{c}{by the three} \\
	    \cline{2-2}
\cline{3-10} \cline{12-20}
                &       &  \# of & \multicolumn{3}{c}{Avg CPU time} & & \multicolumn{3}{c}{\# of times} & & \# of & &  \multicolumn{7}{c}{obtained the best}\\ \cline{14-20}
 &       & cases   & \multicolumn{3}{c}{$t(s)$} & & \multicolumn{3}{c}{was faster} & & cases & & \multicolumn{3}{c}{BUBs} & &  \multicolumn{3}{c}{BLBs}\\               
\cline{4-6} \cline{8-10}  \cline{12-12} \cline{14-16} \cline{18-20}

 $\alpha$               &       &              & {\tt BCFXY} & {\tt BCFXY}$^{*}$ & {\tt BCFX}$^{++}$ & & {\tt BCFXY} & {\tt BCFXY}$^{*}$ & {\tt BCFX}$^{++}$ &  & & & {\tt BCFXY} & {\tt BCFXY}$^{*}$ & {\tt BCFX}$^{++}$ & & {\tt BCFXY} & {\tt BCFXY}$^{*}$ & {\tt BCFX}$^{++}$ \\
$\frac{\pi}{3}$  &    & 14  & 241.6 & 274.4  & 764.1 &  &  1 & 9 &  4  &  &  23 & & 10 & 6 & 7  & & 3 & 4 & 16 \\
$\frac{2\pi}{5}$ &   &  16 & 267.3 & 122.2  & 111.9 &  &  0  & 5 & 10 &  &  21 &  & 6 & 7 & 8 & & 0 & 1 & 20  \\          
$\frac{\pi}{2}$  &    & 16  & 53.5 &  33.7  & 41.6 &   &   0 &  3 & 12 &  &  20 & & 5 & 9 & 6 & & 0 & 0 & 20 \\
$\frac{3\pi}{5}$ &   &  22 & 681.2 &  538.5 & 219.9 &  &  0 &  2  & 20  &  & 15 &   & 0 & 1 & 14 & & 0 & 0 & 15 \\      
$\frac{2\pi}{3}$ &   &  24 & 258.3  & 168.1  & 42.2 &  &  0  & 0 &  24  &  & 7  & & 0 & 2 & 5 & & 0 & 0 & 7  \\  
$\frac{4\pi}{5}$ &   &  39 &  264.7 & 280.0  & 124.5 &  &  1  & 0 &  36  &  & -  & & - & - & - & & - & - & -  \\             
\hline
\end{tabular}
}
\end{table}

\section{Conclusions}
\label{sec:conclusions}

In this study, we presented two improved formulations for $\alpha-$MSTP, $\mathcal{F}_{xy}^*$ and $\mathcal{F}_x^{++}$. Despite the fact that $\mathcal{F}_{xy}^*$ differs by minor aspects from $\mathcal{F}_{xy}$, an equally strong formulation coming from the literature \cite{cunha:2019},  the BC algorithm based on $\mathcal{F}_{xy}^*$ seems to outperform the one based on $\mathcal{F}_{xy}$. Most likely the reasons being the smaller CPU times involved in linear programming reoptimization.

The second formulation introduced here, $\mathcal{F}_{x}^{++}$, is significantly stronger than $\mathcal{F}_{x}$, another formulation also introduced in \cite{cunha:2019}. It uses a much stronger family of valid inequalities to enforce the required angular constraints. Additionally, it also incorporates valid inequalities for the Stable Set polytope, whose structure in $\alpha-$STs defined by values of $\alpha$ in the interval $(0,\pi)$ was disclosed here. In fact, formulation $\mathcal{F}_{x}^{++}$ is stronger than all the other formulations in the literature. Thanks to that, for the majority of the instances tested here, its accompanying Branch-and-cut algorithm, {\tt BCFXY}$^{++}$, obtained the best computational results and seems to be the best $\alpha-$MSTP available  exact algorithm.

As for future research, we plan to further investigate valid inequalities for $\alpha-$MSTP. In particular, we have not explored valid inequalities for conflict hypergraphs associated to $\mathcal{I}$.  So far, little effort has been dedicated to the development of heuristics and meta-heuristics for the problem. We believe these may be interesting avenues for future investigation.

\appendix
\label{sec:app}

\section{Proof of Theorem \ref{theoprojection}}
\label{sec:apptheo}

The goal of this appendix is to prove Theorem \ref{theoprojection}, i.e., that $\hbox{Proj}_x(\mathcal{F}_{xy}^*) \subseteq \mathcal{F}_x^+$.  To proceed with the proof, we need the following auxiliary result.

\begin{lemma}
\label{lemnovo}
Given any $s(i) \subset \delta(i), s(i) \not = \emptyset$ and $\{i,u\} \in \delta(i) \setminus s(i)$, it holds:
\begin{equation*}
v_{iu}^{(s(i),\alpha)} \leq v^{(s(i),\alpha)}.
\end{equation*}
\end{lemma}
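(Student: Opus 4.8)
The plan is to reinterpret the covering counts geometrically and then replace the external edge $\{i,u\}$ by a suitable edge of $s(i)$ without decreasing the count. Recall that $\measuredangle_{uik} \le \alpha$ holds exactly when $\vec{ik}$ lies in the circular sector of width $\alpha$ obtained by sweeping anti-clockwise from $\vec{iu}$; thus $v_{iu}^{(s(i),\alpha)}$ counts the edges of $s(i)$ whose unit vectors fall in that sector. If this sector contains no edge of $s(i)$, then $v_{iu}^{(s(i),\alpha)} = 0$, while $v^{(s(i),\alpha)} \ge 1$ because every edge covers itself ($\measuredangle_{jij} = 0 \le \alpha$) and $s(i) \ne \emptyset$; the inequality then holds trivially. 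So I may assume the sector anchored at $\vec{iu}$ contains at least one edge of $s(i)$.

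Next I would single out the edge of $s(i)$ that is angularly closest to $\vec{iu}$ inside that sector. Writing the edges of $s(i)$ inside the sector as $\{i,k_1\}, \dots, \{i,k_t\}$ with $t = v_{iu}^{(s(i),\alpha)}$ and ordering them by $\measuredangle_{uik_1} \le \cdots \le \measuredangle_{uik_t} \le \alpha$, I take $\{i,k_1\}$. The key claim is that the width-$\alpha$ sector swept anti-clockwise from $\vec{ik_1}$ contains all of $k_1, \dots, k_t$, i.e. $\measuredangle_{k_1 i k_\ell} \le \alpha$ for every $\ell$. This follows from angle composition: since $\measuredangle_{uik_1} \le \measuredangle_{uik_\ell} \le \alpha \le 2\pi$ and $k_\ell$ is anti-clockwise-ahead of $k_1$, no full turn is involved and $\measuredangle_{k_1 i k_\ell} = \measuredangle_{uik_\ell} - \measuredangle_{uik_1} \le \measuredangle_{uik_\ell} \le \alpha$. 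Hence every edge counted for $u$ is also covered by $\{i,k_1\}$, giving $v_{ik_1}^{(s(i),\alpha)} \ge t = v_{iu}^{(s(i),\alpha)}$.

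Finally, since $\{i,k_1\} \in s(i)$, the definition of $v^{(s(i),\alpha)}$ as a maximum over edges of $s(i)$ yields $v^{(s(i),\alpha)} \ge v_{ik_1}^{(s(i),\alpha)} \ge v_{iu}^{(s(i),\alpha)}$, which is the desired inequality. I expect the only delicate point to be the modular arithmetic of the angles: one must verify that, because all the relevant vectors sit inside a single sector of width at most $\alpha \le 2\pi$, the anti-clockwise angle from $\vec{ik_1}$ to $\vec{ik_\ell}$ equals the difference $\measuredangle_{uik_\ell} - \measuredangle_{uik_1}$ rather than that difference plus $2\pi$, so that the covering relation is genuinely preserved when the anchor is shifted from $\vec{iu}$ to $\vec{ik_1}$.
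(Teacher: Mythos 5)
Your proof is correct and takes essentially the same route as the paper's: both arguments replace the external anchor $\vec{iu}$ by the edge of $s(i)$ that is angularly closest anti-clockwise from it (your $\{i,k_1\}$, the paper's $\{i,z\} = \arg\min\{\measuredangle_{uiv_k}\}$) and use the subtraction identity $\measuredangle_{k_1 i k_\ell} = \measuredangle_{uik_\ell} - \measuredangle_{uik_1} \leq \alpha$ to show that every edge covered by $\{i,u\}$ remains covered after the anchor shift, whence $v_{iu}^{(s(i),\alpha)} \leq v_{ik_1}^{(s(i),\alpha)} \leq v^{(s(i),\alpha)}$. The only difference is organizational: the paper splits off the case where $\vec{iu}$ is collinear with a vector of $Z_i$, whereas your empty-sector base case plus the uniform sweep argument absorbs that case (and makes the modular-arithmetic justification, which the paper leaves implicit, explicit).
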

\begin{proof}
For the proof, we say that an edge $\{i,j\}$ covers $\{i,z\}$ if $j \in L_{iz}$. Denote by $Z_i = \{ \vec{ij} \in R_i: \{i,j\} \in s(i) \}$. We have the following cases to consider:
\begin{enumerate}
\item $\vec{iu}$ is collinear to $\vec{ij} \in Z_i$. \\ Then, the edges $\{i,k\} \in s(i)$ that are covered by $\{i,u\}$ are precisely those covered by $\{i,j\}$. Then, we have:  
$v_{iu}^{(s(i),\alpha)}=v_{ij}^{(s(i),\alpha)} = |\{ \{i,k\} \in s(i): \measuredangle_{jik} \leq \alpha \} | \leq v^{(s(i),\alpha)}$, where the last inequality follows from the definition of $v^{(s(i),\alpha)}$.
\item $\vec{iu}$ is colinear to no vector $\vec{ij} \in Z_i$. \\
Now, it suffices to find any edge $\{i,z\} \in s(i)$ such that $v_{iu}^{(s(i),\alpha)} \leq v_{iz}^{(s(i),\alpha)}$. To that aim, 
assume that $s(i) = \{ \{i,v_1\},\dots,\{i,v_p\} \}$, where $p = |s(i)|$ and the vectors in $Z_i$ are indexed so that the ordering \eqref{ordering} holds. Let $\{i,z\}$ be the edge corresponding to the vector $\vec{iz}$ such that:
$$\vec{iz} = \arg\min \{ \measuredangle_{uiv_k}:  \vec{iv_k} \in Z_i \}.$$
It is clear that all edges in $s(i) \setminus \{i,z\}$ that are not covered by $\{i,z\}$ are also not covered by $\{i,u\}$. Since $\{i,z\}$ covers itself, and $\{i,u\}$ may or not cover $\{i,z\}$ we have that $v_{iu}^{(s(i),\alpha)} \leq v_{iz}^{(s(i)\setminus \{i,z\},\alpha)} + 1 = v_{iz}^{(s(i),\alpha)}\leq v^{(s(i),\alpha)}$.
\end{enumerate}
\end{proof}

\begin{proof}[Proof of Theorem \ref{theoprojection}]
Now, to proceed with the proof of Theorem \ref{theoprojection}, we provide an explicitly description of $\hbox{Proj}_x({{\cal F}_{xy}^*})$, by means of projection cuts. We also address the separation problem: Given $\overline{\mathbf{x}} \in [0,1]^{m}$, decide whether or not $\overline{\mathbf{x}} \in \hbox{Proj}_x({{\cal F}_{xy}^*})$ applies. In case $\overline{\mathbf{x}}  \not \in \hbox{Proj}_x({{\cal F}_{xy}^*})$, then identify an inequality that is satisfied by any point in  $\hbox{Proj}_x({{\cal F}_{xy}^*})$ that is violated by $\overline{\mathbf{x}}$. To that aim, define the following Farkas multipliers: 
\begin{itemize}
\item $\tau_i \in \mathbb{R}: i \in V$, associated to \eqref{eq:assigny} and 
\item $\beta_{ij}^i \geq 0, \: i \in V, \{i,j\} \in \delta(i)$, associated to \eqref{eq:allowed1}. 
\end{itemize} 

Projecting out variables $\mathbf{y}$ from the system of inequalities \eqref{eq:assigny} and \eqref{eq:allowed1}, we obtain the following family of (aggregated) projection cuts
\begin{equation}
\label{eq:projcuts01}
\sum_{i \in V}\sum_{e  = \{i,j\} \in \delta(i)}\beta_{ij}^ix_e \leq \sum_{i \in V} \tau_i,
\end{equation}
\noindent for all vectors of Farkas multipliers satisfying \eqref{proj1} and \eqref{proj2}.
\begin{align}
\sum_{k| \measuredangle_{jik} \leq \alpha}\beta_{ik}^i & \leq \tau_i 
 & & i \in V, \: \: \vec{ij} \in R_i \label{proj1} \\
 \beta_{ij}^i & \geq 0 & & i \in V, \: \: \vec{ij} \in R_i. \label{proj2}
\end{align}
 Thus, $\hbox{Proj}_x({{\cal F}_{xy}^*})$ is defined as $\hbox{Proj}_x({{\cal F}_{xy}^*}) =$ $\left \{ \mathbf{x} \in {\cal F}_{{\cal T}}: \mathbf{x} \hbox{ satisfies }  
\eqref{eq:projcuts01} \hbox{ for all } \right.$ $\left. \hbox{multipliers that satisfy } \eqref{proj1}-\eqref{proj2} \right \}.$

Inequalities \eqref{eq:projcuts01} can be decomposed in a family of projection cuts for each $i \in V$, since Farkas multipliers are independent for each $i$. In fact, if there is an inequality \eqref{eq:projcuts01} violated by a point $\overline{\mathbf{x}} \in [0,1]^m$, there must be a vertex $i \in V$ such that the inequality
\begin{equation}
\label{eq:projcuts03}
\sum_{e  = \{i,j\} \in \delta(i)}\beta_{ij}^ix_e    \leq \tau_i
\end{equation}
\noindent is also violated by $\overline{\mathbf{x}}$. Thus, projection cuts can be written for each vertex $i \in V$, independently, as \eqref{eq:projcuts03}. Denote by $\overline{\mathbf{x}}_i = \{\overline{x}_{ij}: \{i,j\} \in \delta(i)\}$ the piece of the vector  $\overline{\mathbf{x}}$, associated to the edges in $\delta(i)$. Likewise, define $\beta^i = \{\beta_{ij}^i: \{i,j\} \in \delta(i)\}$. For a given $i \in V$ and $\overline{\mathbf{x}}_i \in [0,1]^{|\delta(i)|}$, deciding whether or not $\overline{\mathbf{x}} \in \hbox{Proj}_x(\mathcal{F}_{xy}^*)$ amounts to solving one 
separation problem ($\hbox{{\bf SEP}}_i$), defined below, for each $i \in V$.

\begin{align}
(\hbox{{\bf SEP}}_i) \hspace{3mm} \min \sum_{e  = \{i,j\} \in \delta(i)}( - \beta_{ij}^i)\overline{x}_e + \tau_i \nonumber \\ 
(\tau_i, \beta^i) & \hbox{ satisfies } \eqref{proj1}-\eqref{proj2} \nonumber \\
  \tau_i & \leq 1 & & \label{eq:norm2} 
 \end{align}

The separation problem ($\hbox{{\bf SEP}}_i$) includes a normalization constraint \eqref{eq:norm2}, since otherwise the problem could be unlimited. For instance, one could take a solution $\overline{\beta}^i, \overline{\tau}_i$ for which the objective function above is less than zero, and scale it by any constant larger than $1$,  resulting in feasible Farkas multipliers associated with a smaller objective function.

We now show that, given $s(i) \subseteq \delta(i), s(i) \not = \emptyset$, the Farkas multipliers defined by \eqref{eq:solutionsproj01}-\eqref{eq:solutionsproj03} are feasible to ($\hbox{{\bf SEP}}_i$), and thus generate valid projection cuts.
\begin{align}
\hat{\beta}_{ij}^i & = \frac{1}{v^{(s(i),\alpha)}} & & \{i,j\} \in s(i) \label{eq:solutionsproj01} \\
\hat{\beta}_{ij}^i & = 0                    & & \{i,j\} \in \delta(i) \setminus s(i) \label{eq:solutionsproj02} \\
\hat{\tau}_i & = 1 \label{eq:solutionsproj03}
\end{align}

Since $\hat{\beta}_{ij}^i \geq 0, \hat{\tau}_i = 1$, all we need to show is that constraints \eqref{proj1} are satisfied. For any $\{i,j\} \in \delta(i)$ we then have:
\begin{align*}
    \sum_{k| \measuredangle_{jik} \leq \alpha}\hat{\beta}_{ik}^i & = & \sum_{\{i,k\} \in s(i) | \measuredangle_{jik} \leq \alpha}\hat{\beta}_{ik}^i        + \sum_{\{i,k\} \in \delta(i)\setminus s(i)| \measuredangle_{jik} \leq \alpha}\hat{\beta}_{ik}^i                &  \\
 & = & \sum_{\{i,k\} \in s(i)| \measuredangle_{jik} \leq \alpha}\hat{\beta}_{ik}^i    &  \\
 & = & \frac{1}{v^{(s(i),\alpha)}} | \{ \{i,k\} \in s(i): \measuredangle_{jik} \leq \alpha    \}| \\
 & = & \frac{1}{v^{(s(i),\alpha)}} v_{ij}^{(s(i),\alpha)} \\
 & \leq & 1 \\
 & =     & \hat{\tau}_i
\end{align*}
Note that the inequality above applies because $v_{ij}^{(s(i),\alpha)} \leq v^{(s(i),\alpha)}$ always holds, either by the definition of $v^{(s(i),\alpha)}$, if $\{i,j\} \in s(i)$, or as a consequence of Lemma \ref{lemnovo}, if $\{i,j\} \not \in s(i)$. Multiplying the Farkas multipliers \eqref{eq:solutionsproj01}-\eqref{eq:solutionsproj03} by $v^{(s(i),\alpha)}$, the projection cut \eqref{eq:projcuts03} reads precisely as \eqref{eq:strenthen3} and the proof of Theorem \ref{theoprojection} is complete. 
\end{proof}

To conclude this appendix, note that, since ${\cal F}_{xy} \subseteq {\cal F}_{xy}^*$, $\hbox{Proj}_x(\mathcal{F}_{xy}) \subseteq \hbox{Proj}_x(\mathcal{F}_{xy}^*) \subseteq \mathcal{F}_x^+$ follows.



\end{document}